\newtheorem{thm}{Theorem}[section]
\newtheorem{lmm}[thm]{Lemma}
\newtheorem{prp}[thm]{Proposition}
\newtheorem{cor}[thm]{Corrolary}
\theoremstyle{definition}
\newtheorem{dfn}[thm]{Definition}
\newtheorem{ass}[thm]{Assumption}
\theoremstyle{remark}
\newtheorem{rmk}[thm]{Remark}
\newcommand{\dfnem}[1]{\textit{#1}}
\DeclareMathOperator{\Id}{Id}
\DeclareMathOperator{\Tan}{Tan}
\newcommand{\pp}{{\mathcal{P}}}
\newcommand{\cb}{{C_b}}
\newcommand{\cc}{{C^\infty_c}}
\newcommand{\rr}{{\mathbb{R}}}
\newcommand{\rd}{{\rr^d}}
\newcommand{\rds}{{\rr^{d*}}}
\newcommand{\rn}{{\rr^n}}
\newcommand{\intsr}{\int_{s}^{r}}
\newcommand{\subd}{\partial^-_b}
\newcommand{\supd}{\partial^+_b}
\newcommand{\bd}{\partial_b}
\newcommand{\traj}[3]{\mathsf{X}^{{#1},{#2}}_{m_\cdot}(#3)}
\newcommand{\proj}{{\mathfrak{p}}}
\renewcommand{\div}{\operatorname{div}}
\newcommand{\divx}{\operatorname{div}_x}
\newcommand{\dx}{\nabla_x}
\newcommand{\dm}{\nabla_m}
\newcommand{\dy}{\nabla_y}
\newcommand{\fld}[1]{\frac{\delta {#1}}{\delta m}}
\newcommand{\tran}{{\top}}
\newcommand{\ball}{\mathsf{B}}
\newcommand{\sall}{\mathsf{S}}
\newcommand{\hx}{\hat{x}}
\newcommand{\hy}{\hat{y}}
\newcommand{\hm}{\hat{m}}
\newcommand{\hk}{\check{K}}
\newcommand{\hth}{\check{\theta}}
\newcommand{\hatt}{{\hat{T}}}
\newcommand{\hphi}{\hat{\phi}}
\newcommand{\hpi}{\hat{\pi}}
\newcommand{\mbmu}{{\boldsymbol{\mu}}}
\newcommand{\ds}{\displaystyle}
\title{Lyapunov stability of the equilibrium of the non-local continuity equation}
\author{Yurii Averboukh, Aleksei Volkov}
\begin{document}
%======================================================================================
\maketitle
\begin{abstract}
The paper is concerned with the development of Lyapunov methods for the analysis of equilibrium stability in a dynamical system on the space of probability measures driven by a non-local continuity equation.
We derive sufficient conditions of stability of an equilibrium distribution relying on an analysis of a non-smooth Lyapunov function.
For the linear dynamics we reduce the stability analysis to a study of a quadratic form on a tangent space to the space of probability measures.
These results are illustrated by the studies of the stability of the equilibrium measure for gradient flow in the space of probability measures and Gibbs measure for a system of coupled mathematical pendulums.

\noindent\textbf{Keywords:} non-local continuity equation, second Lyapunov method, non-smooth Lyapunov function, stability, derivatives in the measure space.

\noindent\textbf{MSC:} 34D20, 35B35, 35F20, 35Q70, 35R06, 82C22.
\end{abstract}
%======================================================================================
\section{Introduction}
%======================================================================================
The paper is concerned with the study of qualitative properties of a dynamical system in space probability measures driven by the non-local continuity equation
\[
\partial_t m_t + \div (f(x,m_t) m_t) = 0.
\]
This equation describes the dynamics of a system of infinitely many identical particles in the case where the right-hand side depends not only on the position of the particles, but also on their current distribution.
In this case, the function $f$ plays the role of a vector field that determines the motion of each particle.

Notice that the continuity equation occurs within modeling of the systems of charged
particles~\cite{Boffi_1984}, behavior of supermassive black holes~\cite{Tucci_2017}, behavior of large groups of animals~\cite{Botsford_1981}, dynamics of biological processes~\cite{Di_Mario_1993}, dynamics of public opinion~\cite{Pluchino_2005}, etc.

Previously, such properties of the non-local continuity equation as stability w.r.t. parameters ~\cite{Ambrosio_2014} and exponential stability~\cite{Frankowska_2022} were studied.
Also in~\cite{Piccoli_et_al} the issues of stability of the measure carrier and integral stability were studied for the case where the equilibrium distribution under study is a Dirac measure.

In this paper, we examine the stability in the sense of Lyapunov.
This concept was first proposed in the famous works of A.\,M. Lyapunov ~\cite{Lyapunov_1892,Lyapunov_1893} for systems of ordinary differential equations, after which it received numerous applications.
Since then, the concept of Lyapunov stability attracts a great attention (see, in particular,~\cite{Malkin_1949,Krasovskii_1959, Seis_2018, Karafyllis_2020,Shevitz_1994}).

The study of stability is relies on the first and second Lyapunov methods~\cite{Lyapunov_1892,Lyapunov_1893}.
In the first method, a linear approximation of the dynamics is used to characterize the stability of the system.
The second method assumes the existence of a differentiable function with certain properties called a Lyapunov function.
The Lyapunov's methods have received significant development for controlled systems, especially for stabilization~\cite{Clarke_1998,Aubin_2011}.
Note that non-smooth Lyapunov functions are often used in control theory.
In this case, sub-/supergradients are used instead of derivatives.

Recall that the phase space for the non-local continuity equation is the phase space of probability measures that is not linear.
This fact is the main feature of the examined problem.
At the same time, the concept of an intrinsic derivative was introduced for this case in~\cite{Cardaliaguet_2019}, as well as a number of analogues of the concepts of non-smooth analysis~\cite{Ambrosio_2005}.
In particular, among them are strong and weak sub-/superdifferentials.
Notice that the square of the distance in the space of probability measures to a given measure is generally a non-differentiable function.
However, some of the elements of the superdifferentiation can be described explicitly~\cite{Ambrosio_2005}.

To construct an analogue of the second Lyapunov method, this article uses a non-smooth Lyapunov function.
The proposed construction of the second Lyapunov method relies on the definition of the barycentric sub-/superdifferential introduced in Section~\ref{sec.diff} of this article.
Notice that the barycentric sub-/superdifferential can be built on the basis of the strong Frechet sub-/superdifferential proposed in the work~\cite{Ambrosio_2005}.
This method allows, in particular, to find sufficient conditions for the stability of particle systems with dynamics determined by a gradient flow in the space of probabilistic measures.

Based on the second Lyapunov method, we study the stability of a stationary solution of the non-local continuity equation in the case of a vector field linear w.r.t. phase variables.
This result is based on the application of the second method with the Lyapunov function equal to half the square of the distance to the equilibrium position.
As an illustration of this method, the stability of the Gibbs measure in a system of coupled mathematical pendulums is considered.

The rest of the paper is organized as follows.

The section ~\ref{sec.term} provides general definitions and notations used in the article.
Here, we introduce equivalent definitions of the non-local continuity equation and formulate some properties of the solution. Those are proved in Appendix~\ref{app.traj}.

Subsection~\ref{sec.diff} is concerned with generalizations of the concepts of non-smooth analysis to functions defined on the space of probabilistic measures.
We introduce here the concepts of the inner derivative, borrowed from~\cite{Cardaliaguet_2019}, and the barycentric sub-/superdifferentials of functionals of the measure.
Some properties of these objects are also formulated. As above, their proofs are postponed to Application~\ref{app.diff}.

Section~\ref{sec.2.lyap} contains a generalization of Lyapunov's second method of stability analysis of a dynamical system.
In this section, the Lyapunov function is assumed not to be smooth, but only barycentric superdifferentiable in the vicinity of the equilibrium position.
In the subsection~\ref{sec.2.ex} an example of dynamics given by a gradient flow is considered.

Section ~\ref{sec.1.lyap} is devoted to obtaining stability conditions for systems with a linear vector field.
Further, in this section an example of dynamics given by a system of coupled mathematical pendulums is considered.

Appendix~\ref{app.traj} contains proofs of the properties of trajectories generated by the non-local continuity equation.

Appendix~\ref{app.diff} provides proofs of the properties of sub- and superdifferentials formulated in subsection~\ref{sec.diff}.

Finally, the Appendix~\ref{app.bound} provides the boundedness property of barycentric sub-/superdifferentials of locally Lipschitz continuous functions.
%======================================================================================
\section{Preliminaries} \label{sec.term}
%======================================================================================
\subsection{General notion}
%======================================================================================

Recall that the metric space $(X,\rho)$ is called \dfnem{Polish} if it is complete and separable.

In this subsection, we will assume that $X$ is a Polish space, $(Y, \|\cdot\|)$ is a Banach separable space, and $p> 1$.

Throughout the rest of the article, the following designations are used.

\begin{itemize}
\item $\rr_+$ defines the ray $[0,+\infty)$ on the $\rr$.
\item $\rd$ is the space of $d$-dimensional column vectors.
\item $\rds$ is the space of $d$-dimensional string vectors.
\item ${}^\tran$ stands for transpose operation.
\item $\ds\ball_R(x)$ denotes a closed ball of radius $R$ centered at point $x$, i.e.
\[
\ball_R(x) \triangle \{ y \in X:\ \rho(x,y) \leq R \}.
\]
\item$\ds\sall_R(x)$ denotes a sphere of radius $R$ centered at point $x$, i.e.
\[
\sall_R(x) \triangleq \{ y \in X:\ \rho(x,y) = R \}.
\]
\item $\Gamma_T(X)$ stands for the space of curves in the space $X$ in the sense of continuous functions on the segment $[0,T]$, i.e.
\[
\Gamma_T(X) \triangleq C([0,T];X).
\]
\item $e_t: \Gamma_T(X) \to X$ denotes the evaluation operator defined by the rule
\[
e_t\big(x(\cdot)\big) \triangleq x(t), \quad \text{at } t \in [0,T],
\]
\item$\cb(X)$ is the space of bounded continuous functionals defined on the space $X$.
\item$\cc(X)$ is the space of infinitely differentiable functionals defined on a finite-dimensional space $X$ having a compact carrier.
\item $\Id: X \to X$ is the identity operator defined for every $x \in X$ by
\[
\Id(x) = x.
\]
\item If $(\Omega,\mathcal{F})$ and $(\Omega',\mathcal{F}')$ are measurable spaces, $\mu$ is measure on $\mathcal{F}$, mapping $h: \Omega\to \Omega'$ is $\mathcal{F}/\mathcal{F}'$-measurable, then $h\sharp\mu$ is a push-forward measure of the measure $\mu$ under the action of the function $h$, determined by the rule
\[
(h\sharp\mu)(\Upsilon) = \mu(h^{-1}(\Upsilon))
\]
for an arbitrary $\Upsilon\in\mathcal{F}'$.
\item$\ds\pp(X)$ is the space of Borel probability measures over $X$.
\item$L_p(X,\mu;Y)$ is the space of functions from $X$ to $Y$ whose $p$-th degree of norm is integrable by $\mu$.
\item $\varsigma_p(\mu)$ denotes the root of the $p$-th degree of the $p$-th moment of the measure $\mu$, i.e.
\[
\varsigma_p(\mu) \triangleq \left( \int_Y \|x\|^p\ \mu(dx) \right)^{1/p} = \|\Id\|_{L_p(Y,\mu;Y)}.
\]
\item $\pp_p(X)$ is the set of elements $\pp(X)$ with a finite $p$-th moment.
\item If $\{X_i\}_{i=1}^n$ is a finite set of arbitrary Polish spaces, and $x =(x_i)_{i=1}^n\in X_1 \times \dots\times X_n$, then the mapping $p^I$ is a projection operator defined by the rule
\[
\proj^I(x) = (x_i)_{i \in I}.
\]
\end{itemize}

Below, we will need the following obvious property of push-forwarded measure.
\begin{prp} \label{prp.bound.mom}
Let $m \in \pp_p(Y)$, $\Phi \in L_p(Y,m;Y)$ and $\mu = (\Id + \Phi)\sharp m$.
Then
\[
\varsigma_p(\mu) \leq \varsigma_p(m) + \|\Phi\|_{L_p(Y,m;Y)}.
\]
\end{prp}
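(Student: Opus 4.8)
The plan is to compute $\varsigma_p(\mu)$ directly from the definition of push-forward and then apply the triangle inequality in $L_p$. By the change-of-variables formula for push-forward measures, for any nonnegative Borel function $g$ on $Y$ we have $\int_Y g(y)\,\mu(dy) = \int_Y g\big((\Id+\Phi)(x)\big)\,m(dx)$. Taking $g(y) = \|y\|^p$ gives
\[
\varsigma_p(\mu)^p = \int_Y \|y\|^p\,\mu(dy) = \int_Y \|x + \Phi(x)\|^p\,m(dx) = \|\Id + \Phi\|_{L_p(Y,m;Y)}^p,
\]
so that $\varsigma_p(\mu) = \|\Id + \Phi\|_{L_p(Y,m;Y)}$.

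With this identity in hand, the result follows from the triangle inequality for the norm $\|\cdot\|_{L_p(Y,m;Y)}$: since $\Id \in L_p(Y,m;Y)$ (its $L_p$ norm equals $\varsigma_p(m)$, which is finite because $m \in \pp_p(Y)$) and $\Phi \in L_p(Y,m;Y)$ by hypothesis, we obtain
\[
\varsigma_p(\mu) = \|\Id + \Phi\|_{L_p(Y,m;Y)} \leq \|\Id\|_{L_p(Y,m;Y)} + \|\Phi\|_{L_p(Y,m;Y)} = \varsigma_p(m) + \|\Phi\|_{L_p(Y,m;Y)}.
\]
The only points requiring a word of care are measurability of $\Id + \Phi$ (immediate, as a sum of Borel maps) and the fact that $\mu \in \pp_p(Y)$, which is exactly the finiteness just established; both are routine. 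There is no real obstacle here — the statement is essentially a bookkeeping consequence of the change-of-variables formula combined with Minkowski's inequality, which is why the authors call it obvious.
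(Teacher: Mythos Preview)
Your proof is correct and is exactly the natural argument: rewrite $\varsigma_p(\mu)$ via the push-forward change of variables as $\|\Id+\Phi\|_{L_p(Y,m;Y)}$ and apply Minkowski's inequality. The paper does not give a proof at all---it simply calls the property ``obvious''---so your argument fills in precisely what the authors had in mind.
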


\begin{dfn}[{\cite[Definition 8.4.1]{Ambrosio_2005}}]
Let $\mu \in \pp_p(\rd)$.
Then the \dfnem{tangent space} to the space $\pp_p(\rd)$ at the point $\mu$ is the closure of the set of gradients of test functions in the space $L_p(\rd,\mu;\rds$, i.e.
\[
\Tan(\mu) \triangleq \overline{\big\{ \nabla \varphi(x) \in \cc(X) \big\}}^{L_p(\rd,\mu;\rds)}.
\]
\end{dfn}

\begin{dfn}[{\cite[Problem 1.2 and Section 5.1]{Santambrogio_2015}}]
A \dfnem{plan} between the measures $\mu \in \pp(X)$ and $\nu\in \pp(Y)$ is a measure $\pi \in \pp(X\times Y)$ that
\[
\pi(A \times Y) = \mu(A), \quad\pi(X\times B) = \nu(B),
\]
for arbitrary Borel sets $A\subseteq X$ and $B\subseteq Y$.

The set of all plans is denoted as $\Pi(\mu,\nu)$.
\end{dfn}

\begin{dfn}[{\cite[Problem 1.2 and Section 5.1]{Santambrogio_2015}}] \label{dfn.w}
\dfnem{The Wasserstein metric} between the measures $\mu\in\pp_p(X)$ and $\nu\in\pp_p(X)$ is the function
\[
W_p(\mu,\nu) \triangleq \left(\inf\limits_{\pi\in\Pi(\mu,\nu)} \int\limits_{X \times X} \rho(x,y)^p\ \pi(dxdy) \right)^{1/p}.
\]

The plan $\pi$ providing the infimum is called \dfnem{optimal}; the set of all such plans is denoted by $\Pi_o(\mu,\nu)$.

If the optimal plan can be represented in the form of $(\Id,P)\sharp\mu$, then the mapping $P$ is called an \dfnem{optimal transport}.
\end{dfn}

\begin{rmk} \label{rmk.plan.wp}
Some important properties of the Wasserstein metric are proved at~\cite{Santambrogio_2015}.
\begin{enumerate}
\item Set $\Pi_o$ is always non-empty (\cite[Problem~1.2 and Theorem~1.7] {Santambrogio_2015}).
\item The Wasserstein metric is actually a metric in the space $\pp_p(X)$(\cite[Proposiiton~5.1]{Santambrogio_2015}).
\item Metric space $(\pp_p(X), W_p)$ is Polish (\cite[Theorem~5.11]{Santambrogio_2015}).
\end{enumerate}
\end{rmk}

\begin{rmk} \label{rmk.balls}
Due to the equality
\[
W_p(\mu,\delta_0) = \varsigma_p(\mu),
\]
the boundedness of a set in the space of measures, in the sense of embedding in some ball, is equivalent to the uniform boundedness of the $\varsigma_p(\mu)$ for all $\mu$ from this set.
\end{rmk}

\begin{dfn}
The function $\phi:\pp_p(X)\to\rr$ is called \dfnem{locally Lipschitz continuous} if for any $\alpha>0$ there exists $K_\alpha>0$ such that, for all measures $m_1,m_2\in\pp_p(X)$ with the property $\varsigma_p(m_i)\leq\alpha$, the following inequality holds:
\[
|\phi(m_1) - \phi(m_2)| \leq K_\alpha \cdot W_p(m_1,m_2).
\]
We can assume that $K_\alpha$ is a non-decreasing function of $\alpha$ that is right continuous and has left limit (c\`{a}dl\`{a}g).

Note that the function $K_\alpha$ is bounded on a compact set of values of $\alpha$.
\end{dfn}

\begin{dfn}[{\cite[Definition 10.4.2]{Bogachev_2003}}]
Let $\pi \in \pp(X_1 \times \dots \times X_n)$ and $\proj^k\sharp \pi \in \pp(X_k)$ for some $k$.
Denote
\[
\hat{X}_k \triangleq X_1 \times \dots X_{k-1} \times X_{k+1} \times \dots \times X_n.
\]
Then a family of measures $\{ \hpi^k(\ \cdot\ | x_k) : x_k \in X_k \} \subseteq \pp(\hat{X}_k)$ is called a \dfnem{disintegration of measure $\pi$} by $k$-th variable, if for every test function $\phi\in\cb(X_1\times\dots\times X_n)$ the following equation is fulfilled
\[
\begin{split}
\int\limits_{X_1 \times \dots \times X_n} &\phi(x_1,\dots,x_n)\ \pi(dx_1 \dots dx_n)\\
&= \int\limits_{X_k} \left( \int_{\hat{X}_k} \phi(x_1,\dots,x_n)\ \hpi^k(dx_1 \dots dx_{k-1} dx_{k+1} \dots dx_n | x_k) \right)\ (\proj^k\sharp\pi)(dx_k),
\end{split}
\]
\end{dfn}

\begin{rmk}
If $X_i = \rd$ for all $i$, then, from ~\cite[Corollary~10.4.13]{Bogachev_2003}, it follows that disintegration exist for any measures from $\pp(X_1\times\dots\times X_n)$ w.r.t. any variable.
\end{rmk}
%======================================================================================
\subsection{Non-local continuity equation}
%=====================================================================================
The main object of this paper is the initial value problem for the non-local continuity equation
\begin{equation}\label{eq.sys}
\partial_t m_t + \div (f(x,m_t) m_t) = 0,
\end{equation}
\begin{equation}\label{eq.sys.data}
m_0 = m_*,
\end{equation}
where $f: \rd \times \pp_p(\rd) \to \rd$ is vector field.
Hereinafter $p > 1$ is fixed parameter.
As it was mentioned above, this equation describes a system of particles with dynamics of each particle given by the equation
\begin{equation}\label{eq.sys.x}
\dot{x} = f(x,m_t).
\end{equation}

Everywhere below, we will assume the following conditions.

\begin{ass} \label{ass.lip}
The function $f$ is Lipschitz continuous, i.e. there exist a constant $C_0$ such that, for all $x,y \in \rd$ and $\mu,\nu \in \pp_p(\rd)$,
\[
\|f(x,\mu)-f(y,\nu)\| \leq C_0 \big(\|x-y\| + W_p(\mu,\nu)\big).
\]
\end{ass}

\begin{rmk} \label{ass.ulin}
Substituting $\nu=\delta_0$ and $y=0$ in Assumption~\ref{ass.lip}, we can obtain the the sublinear growth property of the function $f$, i.e. there exists a constant $C_1$ such that, for all $x \in \rd$ and $\mu \in \pp_p(\rd)$,
\[
\|f(x,\mu)\| \leq C_1\big(1 + \|x\| + \varsigma_p(\mu)\big).
\]
\end{rmk}

Next, we introduce two definitions of a solution of the non-local continuity equation~\eqref{eq.sys} on the interval $[0,T]$.
Naturally, the solution of the initial value problem~\eqref{eq.sys },~\eqref{eq.sys.data} is a solution of the continuity equation~\eqref{eq.sys} that satisfies initial condition~\eqref{eq.sys.data}.

\begin{dfn}\label{dfn.sol.dist}
A measure-valued function $m_\cdot$ is a \dfnem{solution of non-local continuity equation~\eqref{eq.sys}} on the interval $[0,T]$ in distributional sense provided that
\begin{equation} \label{eq.sol.dist}
\int_0^T \int_{\rd} \big( \partial_t \varphi(x,t) + \dx \varphi(x,t) \cdot f(x,m_t) \big)\ m_t(dx)dt = 0
\end{equation}
for each test function $\varphi \in \cc(\rd \times (0,T);\rr)$.
\end{dfn}

\begin{dfn}\label{dfn.sol.kant}
A measure-valued function $m_\cdot$ is a \dfnem{solution of non-local continuity equation~\eqref{eq.sys}} on the interval $[0,T]$ in Kantorovich sense, if there exists a measure $\eta \in \pp_p\big(\Gamma_T(\rd)\big)$ such that
\begin{enumerate}
\item $m_t = {e_t}\sharp \eta$,
\item $\eta$-a.a. $x(\cdot) \in \Gamma_T(\rd)$ satisfies
\[
\frac{d}{dt}x(t) = f(x(t),m_t).
\]
\end{enumerate}
\end{dfn}

From \cite[Proposition~8.2.1]{Ambrosio_2005} it follows that Definition~\ref{dfn.sol.dist} and Definition~\ref{dfn.sol.kant} are equivalent.
The existence and uniqueness result for the solution is proved in~\cite{Averboukh_2022} under more general assumptions.

\begin{rmk}
Due to the fact that the solution exists for each $T > 0$, while the solution on a larger interval is an extension of the solution on a smaller one, we can assume that the solution is defined on the entire semiaxis $\rr_+$.
\end{rmk}

Further, for a fixed trajectory $m_\cdot$ the solution of the initial value problem
\[
\frac{d}{dt}x(t) = f(x(t),m_t), \quad x(s) = z,
\]
evaluated at the time $r$, will be denoted by $\ds \traj{s}{z}{r}$.
\medskip

At the end of this section, we present some properties of the solution of the non-local continuity equation.

\begin{prp} \label{prp.bound.sol}
Let $T > 0$, $m_* \in \pp_p(\rd)$ and $\alpha>0$ be such that $\varsigma_p(m_*) \leq \alpha$, and $m_\cdot \in \Gamma_T(\pp_p(\rd))$ is the solution of the initial value problem~\eqref{eq.sys},~\eqref{eq.sys.data}.
Then for trajectory $\{ m_t:\ t \in [0,T] \}$ of the solution there exists a function $G_1(T,\alpha)$ such that, for all $t \in [0,T]$, the following inequality holds:
\[
\varsigma_p(m_t)
\leq G_1(T,\alpha).
\]
\end{prp}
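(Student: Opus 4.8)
The plan is to estimate the growth of the $p$-th moment via the Kantorovich representation of the solution, transferring the problem from the measure level to the level of individual characteristic curves. By Definition~\ref{dfn.sol.kant} there is $\eta \in \pp_p(\Gamma_T(\rd))$ with $m_t = e_t \sharp \eta$ and such that $\eta$-a.a. curves $x(\cdot)$ solve $\dot x(t) = f(x(t), m_t)$. Since $m_t = e_t \sharp \eta$, we have $\varsigma_p(m_t)^p = \int_{\Gamma_T(\rd)} \|x(t)\|^p\, \eta(dx(\cdot))$, so it suffices to control $\|x(t)\|$ along $\eta$-a.a.\ trajectories and then integrate.

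First I would fix an $\eta$-generic curve $x(\cdot)$ and write $x(t) = x(0) + \int_0^t f(x(\tau), m_\tau)\, d\tau$. Applying the sublinear growth bound from Remark~\ref{ass.ulin} gives $\|f(x(\tau), m_\tau)\| \leq C_1(1 + \|x(\tau)\| + \varsigma_p(m_\tau))$, hence
\[
\|x(t)\| \leq \|x(0)\| + C_1 \int_0^t \big(1 + \|x(\tau)\| + \varsigma_p(m_\tau)\big)\, d\tau.
\]
Next I would take $L_p(\eta)$-norms in the curve variable. Using the triangle inequality in $L_p(\Gamma_T(\rd), \eta)$ and Minkowski's integral inequality to pull the norm inside the time integral, and noting that $\|x(0)\|$ has $L_p(\eta)$-norm equal to $\varsigma_p(m_0) = \varsigma_p(m_*) \leq \alpha$ while $\|\varsigma_p(m_\tau)\|_{L_p(\eta)} = \varsigma_p(m_\tau)$ (a constant in the curve variable), one obtains
\[
\varsigma_p(m_t) \leq \alpha + C_1 \int_0^t \big(1 + 2\varsigma_p(m_\tau)\big)\, d\tau = \alpha + C_1 t + 2 C_1 \int_0^t \varsigma_p(m_\tau)\, d\tau.
\]
Then Gr\"onwall's inequality yields $\varsigma_p(m_t) \leq (\alpha + C_1 T) e^{2 C_1 t} \leq (\alpha + C_1 T) e^{2 C_1 T} =: G_1(T, \alpha)$, which is the desired bound.

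The main obstacle is the measurability/integrability bookkeeping needed to justify taking $L_p(\eta)$-norms and interchanging them with the time integral: one must know $t \mapsto \varsigma_p(m_t)$ is finite and, say, measurable a priori, and that the map $(t, x(\cdot)) \mapsto \|f(x(t), m_t)\|$ is jointly measurable so that Minkowski's integral inequality applies. This can be handled by first working on a subinterval where finiteness is clear (e.g.\ using that $\eta \in \pp_p(\Gamma_T(\rd))$ already forces $\sup_{t} \varsigma_p(m_t) < \infty$, since convergence of curves in $\Gamma_T(\rd) = C([0,T];\rd)$ is uniform and $p$-th moments are controlled by the moment of the sup), and then the Gr\"onwall step upgrades this qualitative finiteness to the explicit bound $G_1(T,\alpha)$. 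An alternative, avoiding Minkowski's integral inequality, is to differentiate $\varsigma_p(m_t)^p$ in $t$ directly: $\tfrac{d}{dt}\varsigma_p(m_t)^p = p\int \|x(t)\|^{p-2} x(t)\cdot f(x(t),m_t)\, \eta(dx(\cdot))$, bound the integrand by $p\|x(t)\|^{p-1} C_1(1 + \|x(t)\| + \varsigma_p(m_t))$, apply H\"older in the curve variable, and run Gr\"onwall on $\varsigma_p(m_t)^p$; either route gives an exponential-in-$T$ constant of the same form.
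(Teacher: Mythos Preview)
Your proposal is correct and essentially identical to the paper's proof: the paper parametrizes characteristics by their initial points and integrates against $m_*$ (using that $m_s = \traj{0}{\cdot}{s}\sharp m_*$ under the Lipschitz Assumption~\ref{ass.lip}), whereas you integrate against the Kantorovich measure $\eta$, but these two integrals coincide. Both routes yield the inequality $\varsigma_p(m_t) \leq \alpha + C_1 t + 2C_1\int_0^t \varsigma_p(m_\tau)\,d\tau$ and then apply Gr\"onwall to obtain the explicit bound $G_1(T,\alpha) = (\alpha + C_1 T)e^{2C_1 T}$.
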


\begin{prp} \label{prp.bound.f}
Let $T > 0$, $m_* \in \pp_p(\rd)$ and $\alpha>0$ be such that $\varsigma_p(m_*) \leq \alpha$, and $m_\cdot \in \Gamma_T(\pp_p(\rd))$ is the solution of the initial value problem~\eqref{eq.sys},~\eqref{eq.sys.data}.
Then there exists a function $G_1(T,\alpha)$ such that, for all $0 \leq s \leq r \leq T$ and for any $x \in \rd$, the following inequality holds:
\[
\left( \int_\rd \|f(\traj{s}{x}{r},m_r) - f(x,m_s)\|^p\ m_s(dx) \right)^{1/p} \leq  G_2(T,\alpha) \cdot (r-s).
\]
\end{prp}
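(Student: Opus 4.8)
The plan is to reduce the estimate to two simpler bounds by exploiting the Lipschitz continuity of $f$. Applying Assumption~\ref{ass.lip} with the pair $(\traj{s}{x}{r},m_r)$ and $(x,m_s)$ gives, pointwise in $x$,
\[
\|f(\traj{s}{x}{r},m_r) - f(x,m_s)\| \le C_0\big(\|\traj{s}{x}{r}-x\| + W_p(m_r,m_s)\big).
\]
Taking the $L_p(\rd,m_s;\rd)$-norm and using the triangle inequality, it therefore suffices to bound each of $\big(\int_\rd \|\traj{s}{x}{r}-x\|^p\, m_s(dx)\big)^{1/p}$ and $W_p(m_r,m_s)$ by a constant depending only on $T$ and $\alpha$ times $(r-s)$; in fact I would bound the second quantity by the first.

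First I would record the identity $m_\tau = \traj{s}{\cdot}{\tau}\sharp m_s$, valid for $s\le\tau\le T$, which follows from the Kantorovich representation of the solution (Definition~\ref{dfn.sol.kant}) and uniqueness of the characteristic ODE with the fixed field $(x,t)\mapsto f(x,m_t)$. In particular $\|\traj{s}{\cdot}{\tau}\|_{L_p(\rd,m_s;\rd)} = \varsigma_p(m_\tau)$, and by Proposition~\ref{prp.bound.sol} this is at most $G_1(T,\alpha)$ for all $\tau\in[0,T]$. Writing $\traj{s}{x}{r}-x = \int_s^r f\big(\traj{s}{x}{\tau},m_\tau\big)\, d\tau$ and applying Minkowski's integral inequality, I obtain
\[
\Big(\int_\rd \|\traj{s}{x}{r}-x\|^p\, m_s(dx)\Big)^{1/p} \le \int_s^r \big\|f(\traj{s}{\cdot}{\tau},m_\tau)\big\|_{L_p(\rd,m_s;\rd)}\, d\tau,
\]
and the sublinear growth of $f$ from Remark~\ref{ass.ulin} together with the pushforward identity (and $m_s$ being a probability measure) gives $\|f(\traj{s}{\cdot}{\tau},m_\tau)\|_{L_p(\rd,m_s;\rd)} \le C_1\big(1 + 2G_1(T,\alpha)\big)$, so the displacement term is at most $C_1(1+2G_1(T,\alpha))(r-s)$.

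For the Wasserstein term I would note that $(\Id,\traj{s}{\cdot}{r})\sharp m_s \in \Pi(m_s,m_r)$, whence $W_p(m_r,m_s)^p \le \int_\rd \|\traj{s}{x}{r}-x\|^p\, m_s(dx)$, which is already controlled. Combining the two estimates and setting $G_2(T,\alpha) \triangleq 2C_0C_1\big(1+2G_1(T,\alpha)\big)$ yields the claim. The only delicate point is the pushforward identity $m_\tau = \traj{s}{\cdot}{\tau}\sharp m_s$: it rests on the fact that $\eta$-almost every characteristic curve is \emph{the} solution of the ODE issued from its value at time $s$, which is where the Lipschitz assumption (through Gr\"onwall uniqueness for the ODE with fixed time-dependent vector field) enters; all remaining steps are routine applications of Minkowski's inequality and the a priori moment bound of Proposition~\ref{prp.bound.sol}.
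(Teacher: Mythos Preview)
Your proof is correct and follows the same high-level plan as the paper: apply the Lipschitz bound from Assumption~\ref{ass.lip}, then control the displacement $\|\traj{s}{x}{r}-x\|$ in $L_p(m_s)$ and the Wasserstein term $W_p(m_s,m_r)$ separately.

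The execution differs slightly. The paper first proves a \emph{pointwise} trajectory bound (Proposition~\ref{prp.bound.traj}) via Gr\"onwall, namely $\|\traj{s}{z}{r}-z\|\le G_3(T)(1+\|z\|+G_1(T,\alpha))(r-s)$, then integrates against $m_s$; it also packages the Wasserstein estimate as a separate statement (Proposition~\ref{prp.bound.w2sol}). You instead integrate first: applying Minkowski's integral inequality to $\traj{s}{x}{r}-x=\int_s^r f(\traj{s}{x}{\tau},m_\tau)\,d\tau$ and then invoking the pushforward identity $m_\tau=\traj{s}{\cdot}{\tau}\sharp m_s$ to convert $\|\traj{s}{\cdot}{\tau}\|_{L_p(m_s)}$ into $\varsigma_p(m_\tau)\le G_1(T,\alpha)$. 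This sidesteps the Gr\"onwall argument entirely and gives a slightly cleaner constant. Your treatment of $W_p(m_s,m_r)$ via the coupling $(\Id,\traj{s}{\cdot}{r})\sharp m_s$ is exactly what the paper does in Proposition~\ref{prp.bound.w2sol}. The pushforward identity for general base time $s$ (not just $s=0$) is indeed the only point requiring care, and your justification via uniqueness of characteristics under the Lipschitz assumption is the right one; the paper uses the same device (with $s=0$) in the proof of Proposition~\ref{prp.bound.sol}.
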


The proofs of the above statements is provided in the appendix~\ref{app.traj}.
%======================================================================================
\subsection{Differentiability in the space of probabilistic measures} \label{sec.diff}
%======================================================================================
There are quite a numerous of various generalizations of the differentiability concept to the case of.
Some of them can be found in~\cite{Ambrosio_2005, Gangbo_2019, Cardaliaguet_2019}.
We use the concept of the intrinsic derivative proposed by P.-L. Lyons.
In addition, we will introduce the concepts of barycentric sub- and superdifferentials.
The relationship between some variants of generalizations of the concept of differential is discussed in more detail in Appendix~\ref{app.diff}.
\medskip

We follow~\cite{Cardaliaguet_2019} and use the concept of flat (variational) derivative to introduce the notion of intrinsic derivative of a functional defined on the space of probability measures.

\begin{dfn}[{\cite[Definition~2.2.1]{Cardaliaguet_2019}}]
Let $\Phi$ be a function from $\pp_p(\rd)$ to $\rr$.
A \dfnem{flat derivative} of the function $\Phi$ at the point $m_* \in \pp_p(\rd)$ is a function $\fld{\Phi}:\pp_p(\rd) \times \rd \to \rr$, that satisfy the equation
\[
\lim\limits_{s \downarrow 0} \frac{\Phi((1-s)m_* + sm) - \Phi(m_*)}s
= \int_\rd \fld{\Phi}(m_*,y)\ \Big[m(dy) - m_*(dy) \Big],
\]
for any measure $m \in \pp_p(\rd)$.
\end{dfn}

\begin{rmk}
If $\Phi:\pp_p(\rd) \to \rr$ has the flat derivative, then the following equality hold true:
\[
\Phi(m) - \Phi(m_*) = \int_0^1 \int_\rd \fld{\Phi}((1-s)m_*+sm,y)\ \Big[m(dy) - m_*(dy) \Big]ds
\]
for all $m_*,m \in \pp_p(\rd)$.
\end{rmk}

\begin{rmk}
Due to the fact that $\fld{\Phi}$ is defined up to an additive constant, its selection is based on the following normalization:
\[
\int_\rd \fld{\Phi}(m,y)\ m(dy) = 0.
\]
\end{rmk}

\begin{dfn}[{\cite[Definition~2.2.2]{Cardaliaguet_2019}}]
Let $\Phi:\pp_p(\rd) \to \rr$ has the flat derivative $\fld{\Phi}$, which is differentiable w.r.t. second variable.
Then an \dfnem{intrinsic derivative} of the function $\Phi$ at the point $m \in \pp_p(\rd)$ is a function $\dm \Phi:\pp_p(\rd) \times \rd \to \rds$ defined by the rule
\[
\dm \Phi(m,y) \triangleq \dy \fld{\Phi}(m,y).
\]
\end{dfn}

\begin{prp} \label{prp.flat.intr}
Let $m_*,m \in \pp_p(\rd)$, $\pi \in \Pi(m_*,m)$, $\Phi:\pp_p(\rd) \to \rr$ and let $\fld{\Phi}$ be differentiable w.r.t. second variable.
Then, the flat and intrinsic derivatives are related by the following relation:
\begin{equation} \label{eq.flat.intr}
\begin{split}
\int_\rd \frac{\delta \Phi}{\delta m}((1-s)m_*&+sm,y)\ \Big[m(dy) - m_*(dy) \Big]\\
&= \int\limits_{\rd\times\rd} \int_0^1 \dm \Phi((1-s)m_*+sm,y_*+q(y-y_*))\ dq \cdot (y-y_*)\ \pi(dy_*dy).
\end{split}
\end{equation}
\end{prp}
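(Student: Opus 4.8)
The plan is to reduce the claimed identity to two elementary observations: the definition of the intrinsic derivative as the spatial gradient of the flat derivative, and the fundamental theorem of calculus applied along straight segments in $\rd$. Fix $s \in [0,1]$ and abbreviate $m_s^{\sharp} \triangleq (1-s)m_* + sm$ and $\psi(y) \triangleq \fld{\Phi}(m_s^{\sharp}, y)$, so that by definition $\dm\Phi(m_s^{\sharp}, y) = \dy\psi(y)$. The left-hand side of~\eqref{eq.flat.intr} is then $\int_\rd \psi(y)\,[m(dy) - m_*(dy)]$, and since $\pi \in \Pi(m_*, m)$ has marginals $m_*$ and $m$, this equals
\[
\int_{\rd\times\rd} \big(\psi(y) - \psi(y_*)\big)\ \pi(dy_*\,dy).
\]

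The next step is to rewrite the integrand $\psi(y) - \psi(y_*)$ using the fundamental theorem of calculus along the segment $q \mapsto y_* + q(y - y_*)$, $q \in [0,1]$. Assuming $\psi$ is $C^1$ (which is exactly the hypothesis that $\fld{\Phi}$ is differentiable in its second variable, together with whatever regularity is implicit in the statement), one has
\[
\psi(y) - \psi(y_*) = \int_0^1 \dy\psi\big(y_* + q(y-y_*)\big)\cdot(y-y_*)\ dq
= \int_0^1 \dm\Phi\big(m_s^{\sharp}, y_* + q(y-y_*)\big)\cdot(y-y_*)\ dq.
\]
Substituting this into the previous display and invoking Fubini (to justify interchanging the $\pi$-integral with the $q$-integral) yields precisely the right-hand side of~\eqref{eq.flat.intr}.

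The main technical point to be careful about is the integrability needed to apply Fubini's theorem and to make sense of all the integrals: one needs $\dm\Phi(m_s^{\sharp}, \cdot)$ to be locally bounded or to have suitable growth so that $\int_0^1\int_{\rd\times\rd}\|\dm\Phi(m_s^{\sharp}, y_*+q(y-y_*))\|\,\|y-y_*\|\,\pi(dy_*dy)\,dq < \infty$; since $\pi$ has $p$-th moments (both marginals lie in $\pp_p(\rd)$) and the segment points stay in a set of controlled moments, a local Lipschitz or local boundedness assumption on $\dm\Phi$ suffices, and this is the kind of standing regularity the surrounding discussion presupposes. Everything else — the marginal identity for $\pi$, the one-dimensional fundamental theorem of calculus — is routine. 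I do not expect any genuine obstacle here; this proposition is essentially a bookkeeping lemma relating the two notions of derivative via an arbitrary coupling, and the only care required is in stating the integrability hypotheses precisely.
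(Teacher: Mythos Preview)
Your proposal is correct and follows essentially the same approach as the paper: both rewrite the left-hand side as $\int_{\rd\times\rd}\big(\fld{\Phi}(m_s^\sharp,y)-\fld{\Phi}(m_s^\sharp,y_*)\big)\,\pi(dy_*dy)$ using the marginal constraints of $\pi$, and then apply the fundamental theorem of calculus (the paper calls it the Lagrange theorem) along the segment $q\mapsto y_*+q(y-y_*)$. The only cosmetic difference is that the paper writes out the marginal step via disintegration of $\pi$, whereas you invoke the coupling property directly; your remarks on Fubini and integrability go slightly beyond what the paper makes explicit.
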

The proof of this proposition is given in Appendix~\ref{app.diff}.
\medskip

Finally, we introduce the concept of barycentric sub- and superdifferentials.
\begin{dfn} \label{dfn.bd}
Let $q = p' = \frac{p}{p-1}$ and let a functional $\phi: \pp_p(\rd) \to \rr$ be upper semi-continuous.
Then, a \dfnem{barycentric superdifferential} $\supd \phi(m)$ of the function $\phi$ at the point $m$ is a set of all functions $\gamma \in L_q(\rds,m;\rds)$ such that, for all $b \in L_p(\rd,m;\rd)$, there exists a function $\xi: \rr_+ \to \rr$ that satisfies the following conditions:
\begin{itemize}
\item $\xi(\tau) \to 0$ as $\tau \to 0$;
\item for any $\tau > 0$, one has that
\[
\phi((\Id + \tau b)\sharp m) - \phi(m)
\leq \tau \int_\rd \gamma(x) \cdot b(x)\ m(dx) + \tau \xi(\tau).
\]
\end{itemize}

A \dfnem{barycentric subdifferential} $\subd\phi(m)$ of the function $\phi$ at point $m$ is the set of all functions $\gamma \in L_q(\rds,m;\rds)$ such that
\[
(-\gamma) \in \supd(-\phi)(m).
\]

A \dfnem{barycentric differential} $\bd\phi(m)$ of the function $\phi$ at point $m$ is the intersection set $\supd\phi(m) \cap \subd\phi(m)$.
\end{dfn}

Everywhere below, except for Appendices, sub-/superdifferentiability will mean precisely barycentric sub-/superdifferentiability.

\begin{rmk}
If a function has a non-empty barycentric differential, then it contains exactly one element.
\end{rmk}

\begin{prp} \label{prp.intr.to.bar}
Let $m \in \pp_p(\rd)$ and a function $\phi: \pp_p(\rd) \to \rr$ has an intrinsic derivative $\dm \phi$ at point $m$.
Then $\dm \phi(m,\cdot) \in \bd\phi(m)$.
\end{prp}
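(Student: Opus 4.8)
The plan is to reduce the whole statement to the single directional‑derivative identity
\begin{equation*}
\lim_{\tau\downarrow 0}\frac{\phi\big((\Id+\tau b)\sharp m\big)-\phi(m)}{\tau}=\int_{\rd}\dm\phi(m,x)\cdot b(x)\ m(dx)\qquad\text{for every }b\in L_p(\rd,m;\rd).\tag{$\star$}
\end{equation*}
Granting $(\star)$, set $\gamma:=\dm\phi(m,\cdot)$ and, for a fixed $b$, define $\xi(\tau):=\tfrac1\tau\big(\phi((\Id+\tau b)\sharp m)-\phi(m)\big)-\int_{\rd}\gamma(x)\cdot b(x)\,m(dx)$ for $\tau>0$ and $\xi(0):=0$. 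Then $\xi(\tau)\to0$ as $\tau\to0$ by $(\star)$, and the defining inequality of the barycentric superdifferential in Definition~\ref{dfn.bd} holds (in fact with equality) for every $\tau>0$, so $\gamma\in\supd\phi(m)$. Applying the same argument to $-\phi$, whose flat and intrinsic derivatives are $-\fld\phi$ and $-\dm\phi$, gives $-\gamma\in\supd(-\phi)(m)$, i.e.\ $\gamma\in\subd\phi(m)$, hence $\gamma\in\bd\phi(m)$. (For this to make sense one also needs $\gamma\in L_q(\rds,m;\rds)$; this is part of the regularity accompanying the existence of $\dm\phi$, and is in any case implied by the local boundedness of Appendix~\ref{app.bound} when $\phi$ is locally Lipschitz.)

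To prove $(\star)$, fix $b\in L_p(\rd,m;\rd)$ and put $\mu_\tau:=(\Id+\tau b)\sharp m$. Proposition~\ref{prp.bound.mom} (with $\Phi=\tau b$) gives $\varsigma_p(\mu_\tau)\le\varsigma_p(m)+\tau\|b\|_{L_p(\rd,m;\rd)}$, so all these measures stay in one ball of $\pp_p(\rd)$ for $\tau\in[0,1]$; estimating the Wasserstein distance by the cost of the plan $(\Id,\Id+\tau b)\sharp m$ yields $W_p(\mu_\tau,m)\le\tau\|b\|_{L_p(\rd,m;\rd)}$, and likewise $W_p\big((1-s)m+s\mu_\tau,m\big)\le\tau\|b\|_{L_p(\rd,m;\rd)}$ for all $s\in[0,1]$. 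Now combine the integral representation of the increment through the flat derivative (the Remark following its definition),
\begin{equation*}
\phi(\mu_\tau)-\phi(m)=\int_0^1\int_{\rd}\fld\phi\big((1-s)m+s\mu_\tau,y\big)\,\big[\mu_\tau(dy)-m(dy)\big]\,ds,
\end{equation*}
with Proposition~\ref{prp.flat.intr} applied to the pair $(m,\mu_\tau)$ and the plan $\pi=(\Id,\Id+\tau b)\sharp m$. Under this plan $y_*=x$, $y=x+\tau b(x)$ and $y-y_*=\tau b(x)$, so the inner integral becomes $\tau\int_{\rd}\int_0^1\dm\phi\big((1-s)m+s\mu_\tau,x+q\tau b(x)\big)\cdot b(x)\,dq\,m(dx)$, and therefore
\begin{equation*}
\frac{\phi(\mu_\tau)-\phi(m)}{\tau}=\int_0^1\int_{\rd}\int_0^1\dm\phi\big((1-s)m+s\mu_\tau,x+q\tau b(x)\big)\cdot b(x)\,dq\,m(dx)\,ds.
\end{equation*}

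The remaining, and main, difficulty is to pass to the limit $\tau\downarrow0$ under this triple integral. Pointwise in $(x,q,s)$ the integrand converges to $\dm\phi(m,x)\cdot b(x)$, since $x+q\tau b(x)\to x$ and $(1-s)m+s\mu_\tau\to m$ in $W_p$ uniformly in $q,s$, using continuity of $\dm\phi$ jointly in its measure and spatial arguments. To justify the interchange one invokes dominated convergence: all the measures $(1-s)m+s\mu_\tau$ lie in a single ball of $\pp_p(\rd)$, on which the relevant bounds for $\dm\phi$ apply (cf.\ Appendix~\ref{app.bound}), so that $\big\|\dm\phi\big((1-s)m+s\mu_\tau,x+q\tau b(x)\big)\big\|\,\|b(x)\|$ admits an $m$‑integrable majorant obtained from $\|x\|^{p-1}$ and $\|b(x)\|$ via H\"older's inequality (here $b\in L_p(\rd,m;\rd)$ and $m\in\pp_p(\rd)$); alternatively one bounds the difference of the two sides of $(\star)$ by $\|b\|_{L_p(\rd,m;\rd)}$ times an $L_q$‑modulus of continuity of $\dm\phi$ along the perturbation. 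Either way the limit equals $\int_{\rd}\dm\phi(m,x)\cdot b(x)\,m(dx)$, which establishes $(\star)$ and completes the proof.
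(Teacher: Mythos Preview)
Your argument is correct and follows essentially the same line as the paper's: both reduce the claim to the directional-derivative identity $(\star)$ and then rewrite it in the $\xi$-form required by Definition~\ref{dfn.bd}. The only difference is that the paper obtains $(\star)$ by a direct citation of \cite[Proposition~2.2.3]{Cardaliaguet_2019}, whereas you spell out a self-contained derivation via the flat-derivative integral representation and Proposition~\ref{prp.flat.intr}, followed by a dominated-convergence passage to the limit. The extra regularity you invoke for that last step (joint continuity of $\dm\phi$ in $(m,x)$ and an $L_q$-type growth bound) is precisely what is built into the Cardaliaguet framework behind the cited proposition, so the two proofs are equivalent in substance; yours simply unpacks the black box.
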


The proof of this statement is given in the appendix~\ref{app.diff}.

The boundedness property of the barycentric superdifferential of a locally Lipschitz continuous function which, in our opinion, is also of interest is given in Appendix~\ref{app.bound}.
%======================================================================================
\section{Second Lyapunov method} \label{sec.2.lyap}
%======================================================================================
Let us define the basic notions of the Lyapunov stability theory.

\begin{dfn}
A measure $\hm$ is called an \dfnem{equilibrium of continuity equation~\eqref{eq.sys}} if the measure-valued function $m_\cdot$ defined by the rule $m_t \equiv \hm$ is a solution of continuity equation~\eqref{eq.sys}.
By Definition~\ref{dfn.sol.dist} we can reformulate this condition in the following form:
\[
\div (f(x,\hm) \hm) = 0,
\]
in the distribution sense.
\end{dfn}

\begin{dfn}
An equilibrium $\hm$ is called \dfnem{stable} provided that for every $\varepsilon>0$ there exists $\delta>0$ such that, if $m_* \in \pp_p(\rd)$, $W_p(\hm,m_*) < \delta$, $t>0$ and $m_\cdot$ is a solution of initial problem~\eqref{eq.sys},~\eqref{eq.sys.data}, then one has that
\[
W_p(\hm,m_t) < \varepsilon.
\]
\end{dfn}

\begin{dfn}
Let a function $\phi: \pp_p(\rd) \to \rr_+$ be locally Lipschitz continuous, $\phi(\hm)=0$ and satisfying the condition
\[
\inf\limits_{\mu \in \sall_\varepsilon(\hm)} \phi(\mu) > 0
\]
for some $R>0$ and for all $0 < \varepsilon \leq R$.

Function $\phi$ is called \dfnem{superdifferentiable Lyapunov function} for equilibrium $\hm$ if, in addition, it satisfies the following conditions
\begin{enumerate}
\item $\phi$ is superdifferentiable at the ball $\ds \ball_R(\hm)$;
\item $\ds \sup\limits_{\mu \in \ball_R(\hm)} \inf\limits_{\gamma \in \supd\phi(\mu)} \int_{\rd} \gamma(x) \cdot f(x,\mu)\ \mu(dx) \leq 0$.
\end{enumerate}

Function $\phi$ is called \dfnem{subdifferentiable Lyapunov function} for equilibrium $\hm$ if, in addition, it satisfies the following conditions
\begin{enumerate}
\item $\phi(\hm) = 0$;
\item $\inf\limits_{\mu \in \sall_\varepsilon(\hm)} \phi(\mu) > 0$ for all $0 < \varepsilon \leq R$;
\item $\phi$ is subdifferentiable at the ball $\ds \ball_R(\hm)$;
\item $\ds \sup\limits_{\mu \in \ball_R(\hm)} \inf\limits_{\gamma \in \subd\phi(\mu)} \int_{\rd} \gamma(x) \cdot f(x,\mu)\ \mu(dx) \leq 0$.
\end{enumerate}

Function $\phi$ is called \dfnem{differentiable Lyapunov function} for equilibrium $\hm$ if, in addition, it satisfies the following conditions
\begin{enumerate}
\item $\phi(\hm) = 0$;
\item $\inf\limits_{\mu \in \sall_\varepsilon(\hm)} \phi(\mu) > 0$ for all $0 < \varepsilon \leq R$;
\item $\phi$ is differentiable at the ball $\ds \ball_R(\hm)$;
\item $\ds \sup\limits_{\mu \in \ball_R(\hm)} \int_{\rd} \dm \phi(\mu,x) \cdot f(x,\mu)\ \mu(dx) \leq 0$.
\end{enumerate}
\end{dfn}

Now we formulate the main result of the section, namely, an analogue of the second Lyapunov method for stability of the equilibrium.
Recall that $\sall_\varepsilon(\hm)$ and $\ball_R(\hm)$ stands for the sphere of radius $\varepsilon$ and the closed ball of radius $R$ centered at $\hm$ respectively.

\begin{thm}[The second Lyapunov method for non-local continuity equation] \label{thm.second.method}
Let a measure $\hm \in \pp_p(\rd)$ be an equilibrium of continuity equation~\eqref{eq.sys} and let a function $f: \rd \times \pp_p(\rd) \to \rd$ satisfy Assumption~\ref{ass.lip}.
Assume additionally that there exists superdifferentiable Lyapunov function $\phi$ for the equilibrium $\hm$.
Then, equilibrium $\hm$ is stable.
\end{thm}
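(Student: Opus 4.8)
The plan is to follow the classical Lyapunov argument, adapted to the Wasserstein setting. Fix $\varepsilon \in (0, R]$ and set $\ell \triangleq \inf_{\mu \in \sall_\varepsilon(\hm)} \phi(\mu) > 0$. Since $\phi$ is locally Lipschitz with $\phi(\hm) = 0$, there is $\delta \in (0,\varepsilon)$ such that $\phi(m_*) < \ell$ whenever $W_p(\hm, m_*) < \delta$. I claim this $\delta$ witnesses stability for $\varepsilon$. The heart of the matter is to show that along any solution $m_\cdot$ starting from such $m_*$, the function $t \mapsto \phi(m_t)$ is non-increasing, so that $\phi(m_t) \le \phi(m_*) < \ell$ for all $t$; combined with the coercivity on spheres and a connectedness/continuity argument in $t$, this forces $W_p(\hm, m_t) < \varepsilon$ for all $t \ge 0$.

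First I would establish the monotonicity of $t \mapsto \phi(m_t)$ on any interval where $m_t$ stays in $\ball_R(\hm)$. The key estimate is a one-sided difference quotient bound: for $0 \le s \le r$,
\[
\phi(m_r) - \phi(m_s) \le o(r-s) \quad \text{as } r \downarrow s,
\]
uniformly enough to conclude $\phi(m_r) \le \phi(m_s)$. To get this, observe that by Definition~\ref{dfn.sol.kant} the map $x \mapsto \traj{s}{x}{r}$ pushes $m_s$ forward to $m_r$, and it can be written as $\Id + \tau b_\tau$ with $\tau = r - s$ and $b_\tau(x) = \tfrac{1}{\tau}(\traj{s}{x}{r} - x)$. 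Proposition~\ref{prp.bound.f} together with Assumption~\ref{ass.lip} shows that $b_\tau \to f(\cdot, m_s)$ in $L_p(\rd, m_s; \rd)$ as $\tau \downarrow 0$. Now pick any $\gamma \in \supd\phi(m_s)$; applying the barycentric superdifferential inequality from Definition~\ref{dfn.bd} with the field $b = f(\cdot, m_s)$ (or, more carefully, with $b_\tau$, controlling the discrepancy by the $L_p$-convergence and the $L_q$-bound on $\gamma$ which follows from local Lipschitzness via Appendix~\ref{app.bound}) gives
\[
\phi(m_r) - \phi(m_s) \le \tau \int_\rd \gamma(x) \cdot f(x, m_s)\, m_s(dx) + o(\tau).
\]
Taking the infimum over $\gamma \in \supd\phi(m_s)$ and invoking condition~(2) in the definition of a superdifferentiable Lyapunov function, the leading term is $\le 0$, so $\phi(m_r) - \phi(m_s) \le o(\tau)$. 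A standard argument (e.g.\ covering $[s,r]$ and summing, or noting this shows the upper-right Dini derivative of $t\mapsto\phi(m_t)$ is $\le 0$ together with continuity of $t \mapsto \phi(m_t)$, which holds since $t \mapsto m_t$ is $W_p$-continuous and $\phi$ is locally Lipschitz and trajectories stay bounded by Proposition~\ref{prp.bound.sol}) upgrades this to genuine monotonicity.

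Second, the "trapping" argument: let $[0, T^*)$ be the maximal interval on which $W_p(\hm, m_t) < \varepsilon$; it is nonempty since $W_p(\hm, m_*) < \delta < \varepsilon$, and open-to-the-right by continuity. On this interval $m_t \in \ball_\varepsilon(\hm) \subseteq \ball_R(\hm)$, so the monotonicity above applies and $\phi(m_t) \le \phi(m_*) < \ell$ for all $t \in [0, T^*)$. If $T^* < \infty$, then by continuity $W_p(\hm, m_{T^*}) = \varepsilon$, i.e.\ $m_{T^*} \in \sall_\varepsilon(\hm)$, whence $\phi(m_{T^*}) \ge \ell$; but $\phi(m_{T^*}) = \lim_{t \uparrow T^*} \phi(m_t) \le \phi(m_*) < \ell$, a contradiction. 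Hence $T^* = \infty$ and $W_p(\hm, m_t) < \varepsilon$ for all $t \ge 0$, which is exactly stability.

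The main obstacle I anticipate is the first step — rigorously passing from the infinitesimal barycentric superdifferential inequality (which is stated for a fixed displacement field $b$ and a single time step $\tau$) to the monotonicity of $\phi$ along the whole trajectory. Two technical points need care: (i) the error function $\xi$ in Definition~\ref{dfn.bd} depends on the chosen $b$, and along the trajectory the relevant field $f(\cdot, m_s)$ varies with $s$, so one must ensure the $o(\tau)$ estimates are uniform in $s$ over the compact-in-$W_p$ trajectory segment — this should follow from the Lipschitz bounds in Assumption~\ref{ass.lip}, Proposition~\ref{prp.bound.f}, and the càdlàg/locally-bounded structure of the Lipschitz constant $K_\alpha$, but it requires a careful equicontinuity-type argument; and (ii) one is composing many small push-forwards $\traj{s}{\cdot}{r}$ rather than a single one, so the flow/semigroup property of the trajectories (Appendix~\ref{app.traj}) must be used to justify that the incremental estimates telescope. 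Everything else — the choice of $\delta$, the coercivity on spheres, and the topological trapping — is routine.
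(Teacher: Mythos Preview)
Your proposal is correct and follows essentially the same route as the paper: the paper isolates the monotonicity of $t \mapsto \phi(m_t)$ as Lemma~\ref{lemma.notinc}, proving it via exactly your splitting $\phi(m_r)-\phi(m_s)=\big[\phi(m_r)-\phi((\Id+\tau f(\cdot,m_s))\sharp m_s)\big]+\big[\phi((\Id+\tau f(\cdot,m_s))\sharp m_s)-\phi(m_s)\big]$ with the two pieces bounded respectively by the local Lipschitz constant together with Proposition~\ref{prp.bound.f} and by the superdifferential inequality, and then closes with the same trapping-by-contradiction on the sphere $\sall_\varepsilon(\hm)$. Regarding your obstacle~(i): the paper does \emph{not} establish uniformity of the $o(\tau)$ term in $s$, but instead fixes an auxiliary $\varepsilon>0$, shows $\phi(m_r)-\phi(m_s)\le C(T,m_*)\,\varepsilon\,(r-s)$ for \emph{some} $r>s$ depending on $s$, and then runs a sup-of-good-times argument (equivalent to your Dini-derivative option) to obtain $\phi(m_T)-\phi(m_0)\le C(T,m_*)\,\varepsilon\,T$ before letting $\varepsilon\to 0$ --- so no equicontinuity is needed, and your obstacle~(ii) does not arise either since the increments are taken along the actual flow $m_\cdot$ and telescope trivially.
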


To prove this theorem, we need an auxiliary lemma stating that for the Lyapunov function $\phi$ the mapping $t\mapsto\phi(m_t)$ has a non-strict maximum at the point $t=0$.

\begin{lmm} \label{lemma.notinc}
Let $T>0$, a measure $m_* \in \pp_p(\rd)$, a measure-valued function $m_\cdot \in \Gamma_T(\pp_p(\rd))$ is a solution of the initial value problem~\eqref{eq.sys},~\eqref{eq.sys.data} on the interval $[0,T]$ and for locally Lipschitz continuous function $\phi: \pp_p(\rd) \to \rr_+$ the following conditions are hold:
\begin{enumerate}
\item $\phi$ is superdifferentiable at $m_t$ when $t \in [0,T]$;
\item $\ds \inf\limits_{\gamma \in \supd\phi(m_t)} \int_{\rd} \gamma(x) \cdot f(x,m_t)\ m_t(dx) \leq 0$ when $t \in [0,T]$.
\end{enumerate}
Then
\[
\phi(m_T) - \phi(m_0) \leq 0.
\]
\end{lmm}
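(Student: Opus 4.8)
The plan is to show that $g(t) \triangleq \phi(m_t)$ is non-increasing on $[0,T]$ by establishing that its upper-right Dini derivative is non-positive at every $t \in [0,T)$, and then invoking a standard monotonicity criterion. Since $\phi$ is locally Lipschitz and $m_\cdot$ is continuous into $\pp_p(\rd)$ (hence stays in a bounded ball by Proposition~\ref{prp.bound.sol}), $g$ is continuous on $[0,T]$; a continuous function with non-positive upper-right Dini derivative everywhere on $[0,T)$ is non-increasing, which gives $\phi(m_T) - \phi(m_0) \leq 0$.

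The heart of the argument is the local estimate at a fixed $t \in [0,T)$. First I would identify, for small $h>0$, the relation between $m_{t+h}$ and a push-forward of $m_t$. The natural choice is the flow map: setting $b(x) \triangleq f(x,m_t)$ and using the characteristic map $x \mapsto \traj{t}{x}{t+h}$, one has $m_{t+h} = \mathsf{X}^{t,\cdot}_{m_\cdot}(t+h)\sharp m_t$. The map $\traj{t}{x}{t+h}$ differs from $(\Id + h\,b)(x) = x + h f(x,m_t)$ by an error that is $o(h)$ in $L_p(\rd,m_t;\rd)$: indeed $\traj{t}{x}{t+h} - x - h f(x,m_t) = \int_t^{t+h} \big( f(\traj{t}{x}{r}{},m_r) - f(x,m_t) \big)\, dr$, whose $L_p(m_t)$-norm is bounded by $\int_t^{t+h} G_2(T,\alpha)(r-t)\,dr = O(h^2)$ using Proposition~\ref{prp.bound.f} (after adjusting the statement's notation $G_1 \leftrightarrow G_2$). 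Hence, by local Lipschitz continuity of $\phi$ (with constant $K_\alpha$ valid on the relevant ball, using Proposition~\ref{prp.bound.mom} to control moments of the push-forwards),
\[
\phi(m_{t+h}) - \phi\big((\Id + h b)\sharp m_t\big) \leq K_\alpha W_p\big(m_{t+h}, (\Id + h b)\sharp m_t\big) \leq K_\alpha \big\| \traj{t}{\cdot}{t+h}{} - \Id - h b \big\|_{L_p(\rd,m_t;\rd)} = o(h).
\]
Now pick any $\gamma \in \supd\phi(m_t)$ (the set is non-empty by hypothesis~(1)); by Definition~\ref{dfn.bd} applied with this $b$, $\phi((\Id + h b)\sharp m_t) - \phi(m_t) \leq h \int_\rd \gamma(x)\cdot f(x,m_t)\, m_t(dx) + h\xi(h)$ with $\xi(h)\to 0$. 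Adding the two estimates and dividing by $h$,
\[
\frac{g(t+h) - g(t)}{h} \leq \int_\rd \gamma(x)\cdot f(x,m_t)\, m_t(dx) + \xi(h) + \frac{o(h)}{h}.
\]
Letting $h \downarrow 0$ and then taking the infimum over $\gamma \in \supd\phi(m_t)$, hypothesis~(2) yields $\limsup_{h\downarrow 0} \frac{g(t+h)-g(t)}{h} \leq 0$.

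The main obstacle is the $o(h)$ control in the first displayed estimate: one must verify that the characteristic flow $\traj{t}{\cdot}{t+h}{}$ is genuinely an $L_p(m_t)$-perturbation of $\Id + h f(\cdot,m_t)$ of order $o(h)$, uniformly enough to pass to the limit, and that all intermediate measures $(\Id + hb)\sharp m_t$ and $m_{t+h}$ lie in a common ball $\ball_\alpha(\delta_0)$ so that a single Lipschitz constant $K_\alpha$ applies — this is where Propositions~\ref{prp.bound.sol}, \ref{prp.bound.f} and \ref{prp.bound.mom} are used. A secondary point is the choice of $\gamma$: since $\xi$ in Definition~\ref{dfn.bd} may depend on both $\gamma$ and $b$, one fixes $b = f(\cdot,m_t)$ first, then for each $\gamma$ gets its own $\xi$, takes $h\downarrow 0$ for that $\gamma$, and only afterwards takes the infimum over $\gamma$ — the order matters and should be spelled out carefully. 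Finally, I would state explicitly the elementary lemma that a continuous function on $[0,T]$ with everywhere non-positive upper-right Dini derivative on $[0,T)$ is non-increasing, to close the argument.
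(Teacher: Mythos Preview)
Your proposal is correct and follows essentially the same route as the paper: both split $\phi(m_{t+h})-\phi(m_t)$ into a flow-approximation piece $\phi(m_{t+h})-\phi((\Id+hb)\sharp m_t)$ controlled via the local Lipschitz constant and Proposition~\ref{prp.bound.f}, plus a superdifferential piece $\phi((\Id+hb)\sharp m_t)-\phi(m_t)$ controlled via Definition~\ref{dfn.bd} with $b=f(\cdot,m_t)$. The only cosmetic difference is the globalization: you invoke the standard Dini-derivative monotonicity criterion, whereas the paper carries out the equivalent $\varepsilon$-argument by hand (fixing $\varepsilon>0$, choosing $\gamma_s$ with $\int\gamma_s f\,dm_s\leq\varepsilon$, defining $\Theta=\{\theta:\phi(m_\theta)-\phi(m_0)\leq C(T,m_*)\varepsilon\theta\}$, showing $\sup\Theta=T$, and letting $\varepsilon\to 0$).
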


\begin{proof}
Foremost, we claim the following fact: there exists a constant $C(T,m_*)$ such that, for each $\varepsilon > 0$ and for every $s \in [0,T)$, one can find $r \in (0,T]$ satisfying the inequality
\begin{equation} \label{eq.lemma.r.s}
\phi(m_r) - \phi(m_s)
\leq C(T,m_*) \cdot \varepsilon \cdot (r-s).
\end{equation}

To prove this fact we fix $\varepsilon > 0$ and the time $s \in [0,T)$.
From the second condition of the Lemma we can choose a supergradient $\gamma_s \in \supd(m_s)$ such that the following inequality holds:
\[
\int_{\rd} \gamma_s(x) \cdot f(x,m_s)\ m_s(dx) \leq \varepsilon.
\]
Further, let $r \in (s,T]$ and $\tau > 0$ be such that
\[
\begin{split}
\xi_s(\tau) &\leq \varepsilon, \\
\tau = (r-s) &\leq \varepsilon.
\end{split}
\]
Here $\xi_s$ is a function, defined in Definition~\ref{dfn.bd} that satisfies the inequality
\begin{equation} \label{eq.lemma.shs.s}
\phi\big((\Id + \tau b_s)\sharp m_s\big) - \phi(m_s)
\leq \tau \int_{\rd} \gamma_s(x) \cdot b_s(x)\ m_s(dx) + \tau \xi_s(\tau).
\end{equation}

Furthermore, let
\[
b_s(x) \triangleq f(x,m_s).
\]
We consider the difference
\[
\phi(m_r) - \phi(m_s)
= \Big[ \phi(m_r) - \phi\big((\Id + \tau b_s)\sharp m_s\big) \Big] + \Big[ \phi\big((\Id + \tau b_s)\sharp m_s\big) - \phi(m_s) \Big].
\]

Due to choice of $\tau$ we have that
\[
\phi\big((\Id + \tau b_s)\sharp m_s\big) - \phi(m_s)
\leq 2\varepsilon(r-s).
\]
\medskip

Now, let us evaluate the difference
\[
\phi(m_r) - \phi\big((\Id + \tau b_s)\sharp m_s\big).
\]
To this end, we put
\[
\beta_s(T,m_*) \triangleq G_1(T,\varsigma_p(m_*)) + T \cdot \|f(\cdot,m_s)\|_{L_p(\rd,m_s;\rd)}.
\]
Notice that, due to Proposition~\ref{prp.bound.mom},
\[
\varsigma_p((\Id + \tau b_s)\sharp m_s)
\leq G_1(T,\varsigma_p(m_*)) + \tau \|f(\cdot,m_s)\|_{L_p(\rd,m_s;\rd)}
\leq \beta_s(T,m_*),
\]
while Proposition~\ref{prp.bound.sol} yields the estimate
\[
\varsigma_p(m_r)
\leq G_1(T,\varsigma_p(m_*))
\leq \beta_s(T,m_*).
\]
Now, let us introduce a map
\[
\chi_1: z \mapsto \left( z + (r-s) \cdot b_s(z),\ z + \intsr f(\traj{s}{z}{t},m_t)\ dt \right),
\]
and define a plan by the rule
\[
\pi \triangleq \chi_1\sharp m_s.
\]
Then, thanks to the local Lipschitz continuity of the function $\phi$, Definition~\ref{dfn.w} and Proposition~\ref{prp.bound.f}, the following estimation is fulfilled:
\[
\begin{split}
\phi(m_r) &- \phi((\Id + \tau b_s)\sharp m_s)\\
&\leq K_{\beta_s(T,m_*)} \cdot W_p\big((\Id + \tau b_s)\sharp m_s,m_r\big)\\
&\leq K_{\beta_s(T,m_*)} \cdot \left( \int_\rd \left\| x + (r-s) \cdot b_s(x) - x - \intsr f(\traj{s}{x}{t},m_s)\ dt \right\|^p\ m_s(dx) \right)^{1/p}\\
&\leq K_{\beta_s(T,m_*)} \cdot G_2(T,\varsigma_p(m_*)) \cdot (r-s)^2
\leq K_{\beta_s(T,m_*)} \cdot G_2(T,\varsigma_p(m_*)) \cdot \varepsilon \cdot (r-s).
\end{split}
\]
Denoting $\hk \triangleq \sup\limits_{s \in [0,T]} K_{\beta_s(T,m_*)}$, we arrive to the inequality
\begin{equation} \label{eq.lemma.r.shs}
\phi(m_r) - \phi((\Id + \tau b_s)\sharp m_s)
\leq \hk \cdot G_2(T,\varsigma_p(m_*)) \cdot \varepsilon \cdot (r-s).
\end{equation}
\medskip

Letting $C(T,m_*) \triangleq 2 + \hk \cdot G_2(T,\varsigma_p(m_*))$ and using inequalities~\eqref{eq.lemma.shs.s} and~\eqref{eq.lemma.r.shs}, we obtain required estimate~\eqref{eq.lemma.r.s}.
\medskip

Next, we consider the set
\[
\Theta
\triangleq \{ \theta \in [0,T]: \phi(m_\theta) - \phi(m_0) \leq C(T,m_*) \cdot \varepsilon \cdot \theta \}.
\]
This set is non-empty.
Indeed, $0 \in \Theta$.
Denote $\hth \triangleq \sup \Theta$.
The continuity of the functions $m_\cdot$ and $\phi(\cdot)$ imply the inclusion $\hth \in \Theta$.
Let us show that $\hth = T$.

By contradiction, assume that $\hth < T$.
Due to~\eqref{eq.lemma.r.s} one can find $\theta \in (\hth,T]$ such that
\[
\phi(m_{\theta}) - \phi(m_{\hth}) \leq C(T,m_*) \cdot \varepsilon \cdot (\theta - \hth).
\]
By the construction of the constant $\hth$ we have that
\[
\phi(m_{\theta}) - \phi(m_0)
= (\phi(m_{\theta}) - \phi(m_{\hth})) - (\phi(m_{\hth}) - \phi(m_0))
\leq C(T,m_*) \cdot \varepsilon \cdot \theta.
\]
In other words, $\theta \in \Theta$ and $\theta > \hth = \sup \Theta$.
This contradicts with assumption that $\hth < T$.

Thus, $\sup \Theta = T \in \Theta$, and therefore
\[
\phi(m_T) - \phi(m_0) \leq C(T,m_*) \cdot \varepsilon \cdot T.
\]
Since we can choose $\varepsilon$ to be arbitrary small, the previous inequality implies
\[
\phi(m_T) - \phi(m_0) \leq 0.
\]
\end{proof}
\medskip

Now we can proof Theorem~\ref{thm.second.method}.

\begin{proof}[Proof of Theorem~\ref{thm.second.method}]
Due to continuity of the function $\phi$ at $\hm$, for every $\Omega>0$ there exists $\omega > 0$ such that, for each measure $m \in \pp_p(\rd)$ satisfying the condition $W_p(m,\hm) < \omega$, the following inequality holds:
\[
|\phi(m) - \phi(\hm)| < \Omega.
\]
From the assumptions of the Theorem, it follows that
\[
|\phi(m) - \phi(\hm)|
= |\phi(m)|
= \phi(m).
\]

Furthermore, to show the stability of the equilibrium $\hm$, let us assume the opposite.
Namely, let there exist $\varepsilon > 0$ such that, for every $\delta > 0$, one can find a measure $m_*\in \pp_p(\rd)$ and a time $\hatt>0$ such that
\[
\begin{split}
W_p(\hm,m_*) &< \delta,\\
W_p(\hm,m_\hatt) &= \varepsilon.
\end{split}
\]
Here, the measure-valued function $m_\cdot$ is a solution of the initial value problem~\eqref{eq.sys},~\eqref{eq.sys.data}.

Without loss of generality, we can assume that
\begin{itemize}
\item $\ds \varepsilon \leq R$;
\item $\ds \Omega = \inf \big\{ \phi(\mu):\ W_p(\mu,\hm) = \varepsilon \big\}$;
\item $\ds \delta \leq \omega \leq R$;
\item $\ds \hatt$ is minimal.
\end{itemize}
Due to choice of $\delta$, $\Omega$ and $\omega$, we have that $\phi(m_*) < \hphi$.
Since $m_\cdot$ is a solution of continuity equation~\eqref{eq.sys} and
\[
m_t \in \ball_\varepsilon(\hm) \subseteq \ball_R(\hm)
\]
when $t \in [0,\hatt]$, then the definition of the function $\hphi$ and Lemma~\ref{lemma.notinc} imply the following inequality:
\[
\hphi \leq \phi(m_\hatt) \leq \phi(m_*) < \hphi.
\]
This contradiction proves the theorem.
\end{proof}

Similarly, the Theorem on the second Lyapunov method can be formulated for the case of subdifferentiable Lyapunov function.

\begin{thm}
Let a measure $\hm \in \pp_p(\rd)$ be an equilibrium of continuity equation~\eqref{eq.sys} and let a function $f: \rd \times \pp_p(\rd) \to \rd$ satisfy Assumption~\ref{ass.lip}.
Assume additionally that there exists subdifferentiable Lyapunov function $\phi$ for the equilibrium $\hm$.
Then, equilibrium $\hm$ is stable.
\end{thm}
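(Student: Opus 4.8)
The plan is to follow the proof of Theorem~\ref{thm.second.method} almost verbatim; the only genuinely new ingredient is a version of Lemma~\ref{lemma.notinc} adapted to the barycentric subdifferential. One might hope to reduce to the superdifferentiable case by applying Theorem~\ref{thm.second.method} to $-\phi$, but this fails: the hypothesis $\sup_\mu\inf_{\gamma\in\subd\phi(\mu)}\int_\rd\gamma\cdot f\,\mu(dx)\le 0$ becomes $\inf_\mu\sup_{\gamma'\in\supd(-\phi)(\mu)}\int_\rd\gamma'\cdot f\,\mu(dx)\ge 0$, whose quantifier pattern is not the one required. So I would instead prove directly the following analogue of Lemma~\ref{lemma.notinc}: if $m_\cdot\in\Gamma_T(\pp_p(\rd))$ solves \eqref{eq.sys},\eqref{eq.sys.data}, $\phi:\pp_p(\rd)\to\rr_+$ is locally Lipschitz continuous, $\phi$ is subdifferentiable at $m_t$ for all $t\in[0,T]$, and $\inf_{\gamma\in\subd\phi(m_t)}\int_\rd\gamma(x)\cdot f(x,m_t)\,m_t(dx)\le 0$ for all $t\in[0,T]$, then $\phi(m_T)-\phi(m_0)\le 0$.

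The key observation is that the subdifferential gives a \emph{lower} bound on the forward increments $\phi((\Id+\tau b)\sharp m)-\phi(m)$, hence an \emph{upper} bound on such an increment when the perturbation $b$ points ``backward'' along the flow. Accordingly, I would establish: there is $C(T,m_*)$ such that for every $\varepsilon>0$ and every $r\in(0,T]$ one can find $s\in[0,r)$ with $\phi(m_r)-\phi(m_s)\le C(T,m_*)\,\varepsilon\,(r-s)$. Fix $\varepsilon$ and $r$; by the hypothesis choose $\gamma_r\in\subd\phi(m_r)$ with $\int_\rd\gamma_r(x)\cdot f(x,m_r)\,m_r(dx)\le\varepsilon$. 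Since $-\gamma_r\in\supd(-\phi)(m_r)$, Definition~\ref{dfn.bd} applied to $-\phi$ with the direction $b_r(x):=-f(x,m_r)\in L_p(\rd,m_r;\rd)$ produces a function $\xi_r$ with $\xi_r(\tau)\to 0$ such that $\phi(m_r)-\phi((\Id+\tau b_r)\sharp m_r)\le -\tau\int_\rd\gamma_r(x)\cdot b_r(x)\,m_r(dx)+\tau\xi_r(\tau)$ for all $\tau>0$. As $-\int_\rd\gamma_r\cdot b_r\,m_r(dx)=\int_\rd\gamma_r\cdot f(\cdot,m_r)\,m_r(dx)\le\varepsilon$, taking $s$ with $\tau=r-s$ small enough that $\xi_r(\tau)\le\varepsilon$ and $\tau\le\varepsilon$ yields $\phi(m_r)-\phi((\Id+\tau b_r)\sharp m_r)\le 2\varepsilon(r-s)$.

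It remains to control $\phi((\Id+\tau b_r)\sharp m_r)-\phi(m_s)$ via the local Lipschitz property of $\phi$ and a Wasserstein estimate. I would couple $(\Id+\tau b_r)\sharp m_r$ and $m_s$ by pushing $m_s$ forward through $z\mapsto\big(\traj{s}{z}{r}-(r-s)f(\traj{s}{z}{r},m_r),\,z\big)$; its first marginal is $(\Id+\tau b_r)\sharp m_r$ because $\traj{s}{z}{r}$ as a map of $z$ pushes $m_s$ to $m_r$, and its second marginal is $m_s$. Since $\traj{s}{z}{r}-z=\intsr f(\traj{s}{z}{t},m_t)\,dt$, the displacement of this coupling is $\intsr\big[f(\traj{s}{z}{t},m_t)-f(\traj{s}{z}{r},m_r)\big]\,dt$, whose $L_p(\rd,m_s;\rd)$-norm is at most $\intsr\|f(\traj{s}{z}{t},m_t)-f(\traj{s}{z}{r},m_r)\|_{L_p(\rd,m_s;\rd)}\,dt$ by Minkowski's integral inequality; bounding the integrand through the triangle inequality against $f(z,m_s)$ and invoking Proposition~\ref{prp.bound.f} twice (with final times $t$ and $r$) gives at most $2G_2(T,\varsigma_p(m_*))(r-s)^2$, so $W_p\big((\Id+\tau b_r)\sharp m_r,m_s\big)\le 2G_2(T,\varsigma_p(m_*))(r-s)^2$. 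The uniform moment bounds needed to fix the Lipschitz constant of $\phi$ on the relevant ball follow, exactly as in Lemma~\ref{lemma.notinc}, from Propositions~\ref{prp.bound.sol} and~\ref{prp.bound.mom} and the sublinear growth of $f$ (Remark~\ref{ass.ulin}). Combining, $\phi(m_r)-\phi(m_s)\le C(T,m_*)\varepsilon(r-s)$. Then the connectedness argument of Lemma~\ref{lemma.notinc}, run from $T$ downwards on $\Theta:=\{\theta\in[0,T]:\phi(m_T)-\phi(m_\theta)\le C(T,m_*)\varepsilon(T-\theta)\}$ with $\hth:=\inf\Theta$, forces $0\in\Theta$; letting $\varepsilon\downarrow 0$ gives $\phi(m_T)-\phi(m_0)\le 0$.

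Granted this lemma, the theorem follows as in Theorem~\ref{thm.second.method}: by continuity of $\phi$ at $\hm$ pick $\delta\le\omega\le R$ so that $W_p(m_*,\hm)<\delta$ implies $\phi(m_*)<\Omega:=\inf\{\phi(\mu):W_p(\mu,\hm)=\varepsilon\}$; if for some such $m_*$ the trajectory reached $\sall_\varepsilon(\hm)$ at a minimal time $\hatt$, then $m_t\in\ball_\varepsilon(\hm)\subseteq\ball_R(\hm)$ for $t\in[0,\hatt]$, the lemma applies on $[0,\hatt]$, and $\Omega\le\phi(m_\hatt)\le\phi(m_*)<\Omega$ --- a contradiction. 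I expect the only delicate point to be the Wasserstein estimate of the third paragraph: one must check that the chosen coupling really has the two prescribed marginals and that Proposition~\ref{prp.bound.f} is applied at the correct intermediate times. Everything else is sign-bookkeeping and a transcription of arguments already carried out in the superdifferentiable case.
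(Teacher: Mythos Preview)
Your proposal is correct and follows the route the paper itself indicates: the paper does not prove this theorem or the accompanying lemma in detail, merely stating that ``as in the previous case, the proof of the Theorem relies on the following lemma'' and then recording the subdifferential analogue of Lemma~\ref{lemma.notinc} without argument. You have supplied precisely the missing content, and your observation that the incremental estimate must be run \emph{backward} in time (fix $r$, produce $s<r$, and use $b_r=-f(\cdot,m_r)$ so that the subgradient inequality yields an upper bound on $\phi(m_r)-\phi((\Id+\tau b_r)\sharp m_r)$) is the right adaptation---the forward version used for supergradients does not transfer directly, as you note. The coupling you build between $(\Id+\tau b_r)\sharp m_r$ and $m_s$ has the correct marginals, the invocation of Proposition~\ref{prp.bound.f} via the triangle inequality against $f(z,m_s)$ is legitimate, and the downward connectedness argument on $\Theta$ mirrors the upward one cleanly; the deduction of stability from the lemma is then verbatim the proof of Theorem~\ref{thm.second.method}.
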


As in the previous case, the proof of the Theorem relies on the following lemma.

\begin{lmm}
Let $T>0$, a measure $m_* \in \pp_p(\rd)$, a measure-valued function $m_\cdot \in \Gamma_T(\pp_p(\rd))$ is a solution of the initial value problem~\eqref{eq.sys},~\eqref{eq.sys.data} on the interval $[0,T]$ and for locally Lipschitz continuous function $\phi: \pp_p(\rd) \to \rr_+$ the following conditions are hold:
\begin{enumerate}
\item $\phi$ is subdifferentiable at $m_t$ when $t \in [0,T]$;
\item $\ds \inf\limits_{\gamma \in \subd\phi(m_t)} \int_{\rd} \gamma(x) \cdot f(x,m_t)\ m_t(dx) \leq 0$ when $t \in [0,T]$.
\end{enumerate}
Then
\[
\phi(m_T) - \phi(m_0) \leq 0.
\]
\end{lmm}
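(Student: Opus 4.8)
The plan is to deduce this lemma from Lemma~\ref{lemma.notinc} by reversing time. The preliminary observation is that the vector field $-f$ again satisfies Assumption~\ref{ass.lip} with the same constant $C_0$, so Propositions~\ref{prp.bound.sol} and~\ref{prp.bound.f} hold for the continuity equation driven by $-f$ as well; and a direct inspection of the proof of Lemma~\ref{lemma.notinc} shows that it uses neither the non-negativity of $\phi$ nor any property of $f$ other than Assumption~\ref{ass.lip} (the construction of $C(T,m_*)$, the auxiliary plan built via $\chi_1$, and the bootstrap on the set $\Theta$ all go through verbatim). Consequently the conclusion of Lemma~\ref{lemma.notinc} remains valid when $f$ is replaced by $-f$ and $\phi$ is allowed to be any locally Lipschitz continuous function $\pp_p(\rd)\to\rr$ that is superdifferentiable along the trajectory and satisfies $\inf_{\gamma\in\supd\phi(m_t)}\int_\rd \gamma(x)\cdot(-f(x,m_t))\,m_t(dx)\le 0$ for $t\in[0,T]$.

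The first step is then to reverse time at the level of the Kantorovich representation. Let $\eta\in\pp_p(\Gamma_T(\rd))$ represent $m_\cdot$ as in Definition~\ref{dfn.sol.kant} (such $\eta$ exists since $m_\cdot$ solves~\eqref{eq.sys}), let $R\colon\Gamma_T(\rd)\to\Gamma_T(\rd)$ be the reversal map $R(x(\cdot))(t)\triangleq x(T-t)$, and set $\tilde\eta\triangleq R\sharp\eta$ and $\tilde m_t\triangleq e_t\sharp\tilde\eta$. Since $e_t\circ R=e_{T-t}$, one gets $\tilde m_t=m_{T-t}$, hence $\tilde m_\cdot\in\Gamma_T(\pp_p(\rd))$ with $\tilde m_0=m_T$ and $\tilde m_T=m_0$; moreover $\tilde\eta$-a.a. curves $\tilde x(\cdot)$ satisfy $\frac{d}{dt}\tilde x(t)=-\dot x(T-t)=-f(x(T-t),m_{T-t})=-f(\tilde x(t),\tilde m_t)$, so $\tilde m_\cdot$ solves the continuity equation driven by $-f$.

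Next, set $\psi\triangleq-\phi$. Since $\supd(-\phi)(\mu)=-\subd\phi(\mu)$ by Definition~\ref{dfn.bd} and $\subd\phi(m_s)\ne\varnothing$ for $s\in[0,T]$, the function $\psi$ is locally Lipschitz continuous and superdifferentiable at every $\tilde m_t$. Using $\supd\psi(\tilde m_t)=-\subd\phi(m_{T-t})$, the two minus signs cancel:
\[
\inf_{\tilde\gamma\in\supd\psi(\tilde m_t)}\int_\rd \tilde\gamma(x)\cdot\bigl(-f(x,\tilde m_t)\bigr)\ \tilde m_t(dx)=\inf_{\gamma\in\subd\phi(m_{T-t})}\int_\rd \gamma(x)\cdot f(x,m_{T-t})\ m_{T-t}(dx)\le 0
\]
for all $t\in[0,T]$, by hypothesis~(2). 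Applying the $-f$ version of Lemma~\ref{lemma.notinc} to $\psi$ and $\tilde m_\cdot$ yields $\psi(\tilde m_T)-\psi(\tilde m_0)\le 0$, i.e. $-\phi(m_0)+\phi(m_T)\le 0$, which is the assertion.

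The step I expect to need the most care is the first one: one must check honestly that the argument of Lemma~\ref{lemma.notinc} is genuinely insensitive to the sign of $f$ and to whether $\phi$ takes values in $\rr_+$ or in $\rr$. If one prefers to avoid this remark, the lemma can instead be proved directly, mirroring Lemma~\ref{lemma.notinc}: a subgradient $\gamma_r\in\subd\phi(m_r)$ now gives the \emph{lower} bound $\phi((\Id+\tau b)\sharp m_r)-\phi(m_r)\ge\tau\int_\rd\gamma_r(x)\cdot b(x)\,m_r(dx)-\tau\xi_r(\tau)$, and choosing $b_r(x)\triangleq-f(x,m_r)$ together with the inverse characteristic flow $z\mapsto\traj{r}{z}{s}$ (using Proposition~\ref{prp.bound.f} read with the two marginals interchanged) produces, for each $\varepsilon>0$ and each $r\in(0,T]$, a point $s\in[0,r)$ with $\phi(m_r)-\phi(m_s)\le C(T,m_*)\cdot\varepsilon\cdot(r-s)$; one then runs the continuity/supremum argument of Lemma~\ref{lemma.notinc} backward from $t=T$ and lets $\varepsilon\downarrow 0$.
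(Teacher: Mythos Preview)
The paper does not supply an explicit proof of this lemma; it only states it after the sentence ``As in the previous case, the proof of the Theorem relies on the following lemma,'' so the intended argument is simply the analogue of Lemma~\ref{lemma.notinc}. Your proposal is correct, and in fact you do more than the paper: you give a clean reduction via time reversal and then sketch the direct analogue.

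A brief comparison. The direct analogue (your final paragraph) is presumably what the authors have in mind, and it is worth stressing why the ``mirror'' is genuinely non-trivial: a subgradient yields only the \emph{lower} bound
\[
\phi\big((\Id+\tau b)\sharp m\big)-\phi(m)\ \ge\ \tau\!\int_\rd \gamma(x)\cdot b(x)\,m(dx)-\tau\xi(\tau),
\]
so pushing forward from $s$ to $r$ with $b=f(\cdot,m_s)$ gives the wrong inequality. One must instead anchor at $m_r$, take $b=-f(\cdot,m_r)$, and compare $(\Id-\tau f(\cdot,m_r))\sharp m_r$ with $m_s$ via the inverse characteristic flow; this is exactly what you write, and Propositions~\ref{prp.bound.sol}--\ref{prp.bound.f} apply unchanged because they depend on $f$ only through Assumption~\ref{ass.lip}, which $-f$ shares. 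Your main route---setting $\tilde m_t=m_{T-t}$, $\psi=-\phi$, and invoking Lemma~\ref{lemma.notinc} for the vector field $-f$---is a slicker packaging of the same idea: it replaces the backward increment by a forward increment for the reversed flow, and the identity $\supd(-\phi)=-\subd\phi$ from Definition~\ref{dfn.bd} makes the two minus signs cancel precisely as you compute. The only thing to record explicitly (you do) is that the proof of Lemma~\ref{lemma.notinc} never uses $\phi\ge 0$ and uses $f$ only through Assumption~\ref{ass.lip}, so the black-box application to $(\psi,-f,\tilde m_\cdot)$ is legitimate.
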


Due to Proposition~\ref{prp.intr.to.bar} and Theorem~\ref{thm.second.method} the following result can be directly derived.

\begin{cor}
Let a measure $\hm \in \pp_p(\rd)$ be an equilibrium of continuity equation~\eqref{eq.sys} and let a function $f: \rd \times \pp_p(\rd) \to \rd$ satisfy Assumption~\ref{ass.lip}.
Assume additionally that there exists differentiable Lyapunov function $\phi$ for the equilibrium $\hm$.
Then, equilibrium $\hm$ is stable.
\end{cor}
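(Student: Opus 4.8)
The corollary follows by combining two earlier results, so the proof is essentially a one-line deduction.

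\begin{proof}[Proof of the Corollary]
The plan is to reduce the claim to Theorem~\ref{thm.second.method} by showing that a differentiable Lyapunov function is automatically a superdifferentiable Lyapunov function. Let $\phi$ be a differentiable Lyapunov function for the equilibrium $\hm$, with associated radius $R>0$. By definition, $\phi:\pp_p(\rd)\to\rr_+$ is locally Lipschitz continuous, $\phi(\hm)=0$, and $\inf_{\mu\in\sall_\varepsilon(\hm)}\phi(\mu)>0$ for every $0<\varepsilon\leq R$; these are exactly the standing hypotheses required of a superdifferentiable Lyapunov function.

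It remains to verify the two structural conditions. Since $\phi$ is differentiable on $\ball_R(\hm)$, for every $\mu\in\ball_R(\hm)$ the intrinsic derivative $\dm\phi(\mu,\cdot)$ exists, and Proposition~\ref{prp.intr.to.bar} gives $\dm\phi(\mu,\cdot)\in\bd\phi(\mu)=\supd\phi(\mu)\cap\subd\phi(\mu)$. In particular $\supd\phi(\mu)\neq\emptyset$, so $\phi$ is superdifferentiable on $\ball_R(\hm)$, which is the first condition. For the second condition, fix $\mu\in\ball_R(\hm)$; since $\dm\phi(\mu,\cdot)\in\supd\phi(\mu)$, we have
\[
\inf_{\gamma\in\supd\phi(\mu)}\int_\rd \gamma(x)\cdot f(x,\mu)\ \mu(dx)
\leq \int_\rd \dm\phi(\mu,x)\cdot f(x,\mu)\ \mu(dx).
\]
Taking the supremum over $\mu\in\ball_R(\hm)$ and invoking condition~(4) in the definition of a differentiable Lyapunov function yields
\[
\sup_{\mu\in\ball_R(\hm)}\inf_{\gamma\in\supd\phi(\mu)}\int_\rd \gamma(x)\cdot f(x,\mu)\ \mu(dx)
\leq \sup_{\mu\in\ball_R(\hm)}\int_\rd \dm\phi(\mu,x)\cdot f(x,\mu)\ \mu(dx)\leq 0.
\]
Thus $\phi$ is a superdifferentiable Lyapunov function for $\hm$, and Theorem~\ref{thm.second.method} applies: the equilibrium $\hm$ is stable.
\end{proof}

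There is no real obstacle here; the only point requiring care is checking that all the ancillary hypotheses (local Lipschitz continuity, positivity on spheres, vanishing at $\hm$) transfer verbatim, which they do since the three notions of Lyapunov function share the same preamble. Alternatively one could run the argument through the subdifferentiable version instead, using $\dm\phi(\mu,\cdot)\in\subd\phi(\mu)$, but the superdifferentiable route via Theorem~\ref{thm.second.method} is the most direct.
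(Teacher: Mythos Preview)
Your proof is correct and follows exactly the route the paper indicates: the paper states only that the corollary ``can be directly derived'' from Proposition~\ref{prp.intr.to.bar} and Theorem~\ref{thm.second.method}, and you have simply spelled out that derivation in full.
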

%======================================================================================
\subsection{Example: Gradient flow}\label{sec.2.ex}
%======================================================================================
Let $v: \rd \times \pp_2(\rd) \to \rr_+$.
Consider a functional
\[
\phi(m) = \int_\rd v(x,m)\ m(dx).
\]
We assume that
\begin{itemize}
\item $\dx v$ and $\dm v$ are defined on $\rd \times \pp_2(\rd)$ and $\rd \times \pp_2(\rd) \times \rd$ respectively;
\item $v(x,m)$ is Lipschitz continuous with the constant $l>0$;
\item $\dx v(x,m)$ and $\int_\rd \dm v(y,m,x)\ m(dy)$ are Lipschitz continuous;
\item $v(0,\delta_0) = 0$;
\item $\dx v(0,\delta_0) + \dm v(0,\delta_0,0) = 0$;
\item for every $\varepsilon > 0$, one can find a constant $\alpha_\varepsilon > 0$ such that, for all $m \in \sall_\varepsilon(\delta_0)$, the inequality $v(\cdot,m) \geq \alpha_\varepsilon > 0$ holds.
\end{itemize}

Let us consider a system of particles driven by the dynamics
\[
\dot{x} = - \big( \dm \phi(m,x) \big)^\tran.
\]
The corresponding continuity equation takes the form
\begin{equation} \label{eq.2.ex.cont}
\partial_t m_t - \div \big(\big( \dm \phi(m,x) \big)^\tran m_t \big) = 0.
\end{equation}
Notice, that (see~\cite[Proposition~A.3]{Averboukh_2022_arxiv})
\[
\dm \phi(m,x) = \dx v(x,m) + \int_\rd \dm v(y,m,x)\ m(dy).
\]

Let us show that the measure $\hm = \delta_0$ is equilibrium of equation~\eqref{eq.2.ex.cont}, i.e.
\[
\int_\rd \dx \varphi(x) \cdot \dm \phi(\hm,x)^\tran\ \hm(dx)
= \dx \varphi(0) \cdot \dm \phi(\hm,0)^\tran
= 0,
\]
for all $\varphi \in \cc(\rd)$.
The left-hand side of this equality can be rewritten as
\[
\begin{split}
\int_\rd &\dx \varphi(x) \cdot \left( \dx v(x,\hm) + \int_\rd \dm v(y,\hm,x)\ \hm(dy) \right)^\tran\ \hm(dx)\\
&\hspace{5cm}= \dx \varphi(0) \cdot \left( \dx v(0,\delta_0) + \dm v(0,\delta_0,0) \right)^\tran
= 0.
\end{split}
\]

Now, check the conditions of the Theorem~\ref{thm.second.method}.
Due to Proposition~\ref{prp.intr.to.bar}
\[
\dm \phi(m,\cdot) \in \bd\phi(m) \subseteq \supd\phi(m).
\]
Thus, functional $\phi$ has the following properties:
\begin{itemize}
\item $\phi(\hm) = 0$;
\item $\inf\limits_{m \in \sall_\varepsilon(\hm)}\phi(m) > 0$ for all $\varepsilon > 0$;
\item since
\[
\begin{split}
\phi(\mu) - \phi(\nu)
&= \int_\rd v(x,\mu)\ \mu(dx) - \int_\rd v(y,\nu)\ \nu(dy) \\
&\hspace{1cm}= \int\limits_{\rd\times\rd} (v(x,\mu)-v(y,\nu))\ \pi(dxdy) \\
&\hspace{1.5cm}\leq \int\limits_{\rd\times\rd} l \cdot \big(\|x-y\| + W_p(\mu,\nu)\big)\ \pi(dxdy) \\
&\hspace{2cm}\leq l \left( \ \int\limits_{\rd\times\rd} \|x-y\|^p\ \pi(dxdy) \right)^{1/p} + l \cdot W_p(\mu,\nu) \leq 2l \cdot W_p(\mu,\nu),
\end{split}
\]
where $\pi \in \Pi_o(\mu,\nu)$, the function $\phi$ is globally Lipschitz continuous;
\item the following inequality holds:
\[
\int_\rd -\dm \phi(m,x) \cdot \dm \phi(m,x)^\tran\ m(dx)
= - \|\dm \phi(m,\cdot)\|_{L_2(\rd,m;\rds)}^2 \leq 0.
\]
\end{itemize}

Therefore, the equilibrium $\hm$ and the function $\phi$ satisfy the conditions of Theorem~\ref{thm.second.method}.
This yields the stability of $\hm$.
%======================================================================================
\section{Stability of systems with a linear vector field} \label{sec.1.lyap}
%======================================================================================
In this section, we will assume that $p=2$.
Further, we will need additional assumptions on the dynamics~$f$ and the equilibrium $\hm$ given below.

\begin{ass} \label{ass.b.dif}
Function $f(x,m)$ is linear and has separated variables, i.e.,
\[
f(x,m) = Ax + \int_\rd By\ m(dy),
\]
where $A$ and $B$ are constant matrices with dimension $d \times d$.
\end{ass}

\begin{ass} \label{ass.ac}
Measure $\hm$ is absolutely continuous w.r.t. the Lebesgue measure.
\end{ass}

Assumption~\ref{as.ac} and \cite[Theorem~6.2.4]{Ambrosio_2005} imply the existence of the unique optimal transport $P$ between $\hm$ and each measure $m\in\pp_2(\rd)$.
Hence, there exists an optimal plan of the form $\pi = (\Id,P) \sharp\hm\in \Pi_o(\hm,m)$.

As a candidate Lyapunov function, we will consider the half of the squared distance to the equilibrium:
\[
\phi(m) \triangleq \frac12 W_2^2(m,\hm).
\]
First, let us find the barycentric superdifferential of the function $\phi$.
\begin{prp} \label{prp.w22.supd}
Let $\hm,m \in \pp_2(\rd)$ and $\pi \in \Pi_o(\hm,m)$.
Then, the function
\[
\hpi^1(x) \triangleq \int_\rd (x-\hx)^\tran\ \pi(d\hx|x)
\]
is a barycentric superdifferential of the function $\phi$ at $m$.
\end{prp}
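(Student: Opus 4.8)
The plan is to verify directly that $\gamma \triangleq \hpi^1$ satisfies the defining inequality of the barycentric superdifferential from Definition~\ref{dfn.bd}. First I would check that $\hpi^1 \in L_q(\rds, m;\rds)$; since $q = p' = 2$ here, this amounts to noting that $\int_\rd \|\hpi^1(x)\|^2\, m(dx) \leq \int_{\rd\times\rd}\|x - \hx\|^2\,\pi(d\hx dx) = W_2^2(\hm, m) < \infty$ by Jensen's inequality applied to the disintegration $\pi(d\hx|x)$. Then, fixing an arbitrary $b \in L_2(\rd, m;\rd)$ and $\tau > 0$, I would estimate $\phi((\Id + \tau b)\sharp m) - \phi(m) = \tfrac12 W_2^2((\Id+\tau b)\sharp m, \hm) - \tfrac12 W_2^2(m,\hm)$ from above. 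The key idea is that $\pi$ is \emph{some} (in general suboptimal) transport plan between $\hm$ and $m$, so pushing it forward through $(\hx, x) \mapsto (\hx, x + \tau b(x))$ gives an admissible plan between $\hm$ and $(\Id + \tau b)\sharp m$, hence an upper bound on $W_2^2((\Id+\tau b)\sharp m,\hm)$.

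The central computation is therefore
\[
W_2^2\big((\Id+\tau b)\sharp m, \hm\big) - W_2^2(m,\hm)
\leq \int_{\rd\times\rd}\!\!\big(\|x + \tau b(x) - \hx\|^2 - \|x - \hx\|^2\big)\,\pi(d\hx dx)
= 2\tau\!\int_{\rd\times\rd}\!\!(x-\hx)\cdot b(x)\,\pi(d\hx dx) + \tau^2\!\int_\rd\!\|b(x)\|^2 m(dx).
\]
Using the disintegration to rewrite the cross term, $\int_{\rd\times\rd}(x-\hx)\cdot b(x)\,\pi(d\hx dx) = \int_\rd \big(\int_\rd (x-\hx)^\tran\,\pi(d\hx|x)\big) b(x)\, m(dx) = \int_\rd \hpi^1(x)\cdot b(x)\, m(dx)$, this gives exactly
\[
\phi\big((\Id+\tau b)\sharp m\big) - \phi(m) \leq \tau \int_\rd \hpi^1(x)\cdot b(x)\, m(dx) + \tfrac{\tau}{2}\Big(\tau\,\|b\|_{L_2(\rd,m;\rd)}^2\Big),
\]
so one may take $\xi(\tau) \triangleq \tfrac{\tau}{2}\|b\|_{L_2(\rd,m;\rd)}^2$, which indeed tends to $0$ as $\tau \to 0$. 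This establishes $\hpi^1 \in \supd\phi(m)$.

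The routine parts are the $L_2$-membership and the expansion of the square; the one genuine point to get right is the direction of the inequality — that suboptimality of the transported plan yields an \emph{upper} bound on the new Wasserstein distance while the optimality of $\pi$ is used (only) to get the exact value $W_2^2(m,\hm)$ subtracted off. I expect the main obstacle, if any, to be purely bookkeeping with the disintegration $\pi(d\hx|x)$: one must make sure the measurability and integrability needed to apply the disintegration theorem and Fubini are in place, which is guaranteed since $\pi \in \pp_2(\rd\times\rd)$ and $X_i = \rd$ (so disintegrations exist by the remark following the disintegration definition). Note that optimality of $\pi$ is not actually needed for the superdifferential inequality itself beyond identifying the baseline value $\phi(m)$; this is consistent with the proposition asserting membership in $\supd\phi(m)$ rather than in $\bd\phi(m)$.
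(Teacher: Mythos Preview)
Your proof is correct and follows essentially the same idea as the paper's: both exploit that pushing the optimal plan $\pi$ forward along $(\hx,x)\mapsto(\hx,x+\tau b(x))$ yields an admissible (generally suboptimal) coupling between $\hm$ and $(\Id+\tau b)\sharp m$, and then expand the squared cost. The paper packages this step by quoting \cite[Theorem~10.2.2]{Ambrosio_2005} as a ready-made inequality for 3-plans and then specializes, whereas you do the elementary quadratic expansion directly; as a by-product you obtain the sharper remainder $\xi(\tau)=\tfrac{\tau}{2}\|b\|_{L_2}^2$ rather than the paper's $\tau\|b\|_{L_2}^2$, and you also verify $\hpi^1\in L_2(\rd,m;\rds)$ explicitly, which the paper leaves implicit. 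Either route is fine; yours is more self-contained, the paper's makes the connection to the strong Fr\'echet superdifferential of $W_2^2/2$ explicit.
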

\begin{proof}
From \cite[Theorem~10.2.2]{Ambrosio_2005} we have that, for all measures $\mu^1,\mu^2,\mu^3 \in \pp_2(\rd)$ and each measure $\mbmu \in \pp(\rd \times \rd \times \rd)$ such that
\[
\begin{split}
\proj^{1,2}\sharp\mbmu &\in \Pi_o(\mu^1,\mu^2),\\
\proj^{3}\sharp\mbmu &= \mu^3,
\end{split}
\]
the following inequality holds true:
\begin{equation} \label{eq.supd.w2.orig}
\begin{split}
\phi(\mu^3)-\phi(\mu^1)
&\leq \int\limits_{(\rd)^3} (x_1-x_2)^\tran \cdot (x_3-x_1)\ \mbmu(dx_1 dx_2 dx_3)\\
&\hspace{3cm}+ \int\limits_{(\rd)^3} \|x_3 - x_1\|^2\ \mbmu(dx_1 dx_2 dx_3).
\end{split}
\end{equation}
Further, we choose an arbitrary $\tau > 0$ and $b \in L_2(\rd,\hm;\rd)$, and measures $\mu^1,\mu^2,\mu^3 \in \pp_2(\rd)$ such that
\[
\begin{split}
\mu^1 &= m,\\
\mu^2 &= \hm,\\
\mu^3 &= (\Id + \tau b) \sharp m,\\
\mbmu &= (\proj^2,\proj^1,\proj^2 + \tau b \circ \proj^2) \sharp \pi.
\end{split}
\]
Renaming variables:
\[
\begin{split}
x &= x_1,\\
\hx &= x_2
\end{split}
\]
and using definition of the push-forward measure, we rewrite the right-hand side of inequality~\eqref{eq.supd.w2.orig} can be rewritten in the form:
\[
\begin{split}
\int\limits_{(\rd)^2} (x-\hx)^\tran \cdot \tau b(x)\ &\pi(d\hx dx)
+ \int\limits_\rd \|\tau b(x)\|^2\ \pi(d\hx dx)\\
&= \int\limits_{(\rd)^2} (x-\hx)^\tran \cdot \tau b(x)\ \pi(d\hx dx)
+ \tau^2 \cdot \|b\|_{L_2(\rd,m;\rd)}^2.
\end{split}
\]
Disintegrating the measure $\pi$ w.r.t. the variable $x$, we obtain
\[
\begin{split}
\int_\rd \int_\rd (x-\hx)^\tran \cdot \tau b(x)\ &\pi(d\hx|x)\ m(dx)
+ \tau^2 \cdot \|b\|_{L_2(\rd,m;\rd)}^2\\
= \int\limits_\rd &\left( \int_\rd (x-\hx)^\tran\ \pi(d\hx|x) \cdot \tau b(x) \right)\ m(dx)
+ \tau^2 \cdot \|b\|_{L_2(\rd,m;\rd)}^2.
\end{split}
\]
Let us denote
\[
\xi(\tau) \triangleq \tau \|b\|_{L_2(\rd,m;\rd)}^2.
\]
Therefore,
\[
\phi((\Id + \tau b) \sharp m) - \phi(m)
\leq \tau \int_\rd \hpi^1(x) \cdot b(x)\ m(dx)
+ \tau \cdot \xi(\tau).
\]
\end{proof}

Now, we can formulate the main result of the section, namely, an analogue of the Lyapunov method for stability of the equilibrium for a system with a linear vector field.

\begin{thm}[Stability of a system with a linear vector field] \label{thm.first.method}
Let a function $f: \rd \times \pp_2(\rd)$ satisfy Assumptions~\ref{ass.lip} and~\ref{ass.b.dif}, and let a measure $\hm \in \pp_2(\rd)$ be an equilibrium that satisfies Assumption~\ref{ass.ac}.
Additionally, assume that the following condition holds for every $\xi \in \Tan(\hm)\setminus\{0\}$:
\begin{equation} \label{eq.neq.qform}
\int_\rd \xi(\hx)^\tran \cdot A \cdot \xi(\hx)\ \hm(d\hx) + \int_\rd \xi(\hx)^\tran\ \hm(d\hx) \cdot B \cdot \int_\rd \xi(\hy)\ \hm(d\hy)
\leq 0.
\end{equation}
Then, the equilibrium $\hm$ is stable.
\end{thm}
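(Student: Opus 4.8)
The plan is to verify the hypotheses of Theorem~\ref{thm.second.method} for the candidate Lyapunov function $\phi(m)=\frac12 W_2^2(m,\hm)$, using the explicit barycentric superdifferential computed in Proposition~\ref{prp.w22.supd}. The nondegeneracy requirements ($\phi(\hm)=0$, $\inf_{\mu\in\sall_\varepsilon(\hm)}\phi(\mu)=\tfrac12\varepsilon^2>0$, local Lipschitz continuity of $W_2^2$ on bounded sets) are immediate, so the whole work is in checking condition~(2) of the definition of a superdifferentiable Lyapunov function, namely
\[
\sup_{\mu\in\ball_R(\hm)}\ \inf_{\gamma\in\supd\phi(\mu)}\ \int_{\rd}\gamma(x)\cdot f(x,\mu)\ \mu(dx)\ \leq\ 0 ,
\]
for $R$ taken as small as needed (in fact any $R>0$ should work given the linearity, but I will keep $R$ free). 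By Proposition~\ref{prp.w22.supd} it suffices to evaluate the integral at $\gamma=\hpi^1$, where $\pi=(\Id,P)\sharp\hm\in\Pi_o(\hm,\mu)$ is the optimal plan guaranteed by Assumption~\ref{ass.ac} and \cite[Theorem~6.2.4]{Ambrosio_2005}; since $\pi=(\Id,P)\sharp\hm$ is induced by a map, the disintegration $\pi(d\hx|x)$ is a Dirac mass and $\hpi^1(x)=(x-P^{-1}(x))^\tran$ $\mu$-a.e., or more symmetrically $\hpi^1(P(\hx))=(P(\hx)-\hx)^\tran$ for $\hm$-a.e.\ $\hx$.

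\medskip

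The key computation is to change variables to the equilibrium. Writing $x=P(\hx)$ and pushing the integral back to $\hm$, and using Assumption~\ref{ass.b.dif} so that $f(x,\mu)=Ax+B\!\int_\rd y\,\mu(dy)=A\,P(\hx)+B\!\int_\rd P(\hy)\,\hm(d\hy)$, one gets
\[
\int_{\rd}\hpi^1(x)\cdot f(x,\mu)\,\mu(dx)
=\int_{\rd}(P(\hx)-\hx)^\tran\Bigl(A\,P(\hx)+B\!\int_\rd P(\hy)\,\hm(d\hy)\Bigr)\hm(d\hx).
\]
Now set $\xi(\hx)\triangleq P(\hx)-\hx$. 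The crucial structural fact is that $\xi\in\Tan(\hm)$: by \cite[Theorem~8.4.1 / 12.4.4]{Ambrosio_2005} (Brenier--McCann), the optimal map $P$ from an absolutely continuous $\hm$ is the gradient of a convex function, hence $P-\Id$ is an $L^2(\hm)$-limit of gradients of smooth functions, i.e.\ lies in $\Tan(\hm)$. Moreover $\hm$ being an equilibrium forces a cancellation: writing $\int_\rd f(x,\hm)\,g(x)\,\hm(dx)=0$ for all $g\in\Tan(\hm)$ (the stationarity condition $\div(f(\cdot,\hm)\hm)=0$ tested against gradients), and using $\int_\rd x\,\hm(dx)=:\bar m$, one obtains $A\hx+B\bar m\perp\Tan(\hm)$ in $L^2(\hm)$; since $\xi\in\Tan(\hm)$ this kills the cross terms involving $\hx$ (the $-\hx^\tran A\hx$, $-\hx^\tran B\bar m$ pieces integrate against the tangent component appropriately). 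After this cancellation the integral collapses exactly to the left-hand side of~\eqref{eq.neq.qform} with this particular $\xi$:
\[
\int_\rd\xi(\hx)^\tran A\,\xi(\hx)\,\hm(d\hx)+\int_\rd\xi(\hx)^\tran\hm(d\hx)\cdot B\cdot\int_\rd\xi(\hy)\,\hm(d\hy),
\]
which is $\le 0$ by hypothesis. Hence condition~(2) holds, $\phi$ is a superdifferentiable Lyapunov function, and Theorem~\ref{thm.second.method} gives stability of $\hm$.

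\medskip

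The main obstacle is making the cancellation of the $\hx$-linear terms rigorous. Two points need care. First, I must state precisely the consequence of $\hm$ being an equilibrium in the form ``$A\,\Id+B\bar m$ is $L^2(\hm)$-orthogonal to $\Tan(\hm)$'': this comes from testing $\div(f(\cdot,\hm)\hm)=0$ against $\nabla\varphi$ for $\varphi\in\cc(\rd)$ and passing to the closure, which is exactly the definition of $\Tan(\hm)$; under Assumption~\ref{ass.b.dif} the vector field $f(\cdot,\hm)$ is affine so there is no regularity issue. Second, in the expansion $(P(\hx)-\hx)^\tran(A P(\hx)+B\bar\mu)$ with $\bar\mu=\int P(\hy)\,\hm(d\hy)=\bar m+\int\xi\,d\hm$, I should expand $AP(\hx)=A\xi(\hx)+A\hx$ and $B\bar\mu=B\!\int\xi\,d\hm+B\bar m$, so the integrand becomes $\xi^\tran A\xi+\xi^\tran A\hx+\xi^\tran B\!\int\xi\,d\hm+\xi^\tran B\bar m$; integrating, the second and fourth terms are $\int\xi^\tran(A\hx+B\bar m)\,\hm(d\hx)=0$ by orthogonality (since $\xi\in\Tan(\hm)$ and $A\,\Id+B\bar m\perp\Tan(\hm)$), leaving precisely~\eqref{eq.neq.qform}. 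One small subtlety: if $\xi=0$ $\hm$-a.e.\ then $\mu=\hm$ and the integral is trivially $0$; otherwise $\xi\in\Tan(\hm)\setminus\{0\}$ and~\eqref{eq.neq.qform} applies directly. This handles all $\mu\in\ball_R(\hm)$ uniformly (indeed for all $\mu\in\pp_2(\rd)$), so the supremum over $\ball_R(\hm)$ is $\le 0$ as required.
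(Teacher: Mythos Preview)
Your proposal is correct and follows essentially the same route as the paper: use $\phi=\tfrac12 W_2^2(\cdot,\hm)$, plug in the barycentric supergradient $\hpi^1$ from Proposition~\ref{prp.w22.supd}, push the integral back to $\hm$ via $\pi=(\Id,P)\sharp\hm$, use $(P-\Id)\in\Tan(\hm)$ together with the equilibrium orthogonality $\int\xi^\tran f(\cdot,\hm)\,d\hm=0$ to kill the cross terms, and reduce to the quadratic form~\eqref{eq.neq.qform}. The paper packages the last two steps into Lemmas~\ref{lemma.diff.eq} and~\ref{lemma.neg.diff} (and in the latter expands $f(x,m)-f(\hx,\hm)$ via $\dx f$, $\dm f$ and Proposition~\ref{prp.flat.intr} before specializing to $A,B$), whereas you exploit the explicit linear form $f(x,m)=Ax+B\!\int y\,m(dy)$ directly; your expansion is slightly more economical but the content is identical. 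Two cosmetic points: the correct reference for $(P-\Id)\in\Tan(\hm)$ is \cite[Proposition~8.5.2]{Ambrosio_2005} (your Brenier-based justification is heuristic, since gradients of convex functions are not automatically in the $L_2$-closure of $\cc$-gradients), and the parenthetical about $P^{-1}$ is unnecessary and not generally valid---your change-of-variables computation does not use it.
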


To prove this theorem, we need two auxiliary results.

\begin{lmm} \label{lemma.diff.eq}
Let a measure $\hm \in \pp_2(\rd)$ satisfy Assumption~\ref{ass.ac}.
Then, for every measure $m \in \pp_2(\rd)$ and every Borel function $f: \rd \times \pp_2(\rd) \to \rd$, the following condition holds:
\begin{equation} \label{eq.diff.eq}
\int\limits_{\rd\times\rd} (x-\hx)^\tran \cdot f(x,m)\ \pi(d\hx dx)
= \int\limits_{\rd\times\rd} (x-\hx)^\tran \cdot \big(f(x,m)-f(\hx,\hm)\big)\ \pi(d\hx dx),
\end{equation}
where $\pi = (\Id,P)\sharp \hm \in \Pi_o(\hm,m)$.
\end{lmm}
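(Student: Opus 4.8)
The claim is equivalent to showing that the "extra" term
\[
\int_{\rd\times\rd} (x-\hx)^\tran \cdot f(\hx,\hm)\ \pi(d\hx dx)
\]
vanishes, since subtracting it from the left-hand side of~\eqref{eq.diff.eq} produces the right-hand side. So the whole task reduces to proving that this integral is zero. The plan is to disintegrate $\pi = (\Id,P)\sharp\hm$ the other way around — i.e., write the integral as an integral over $\hx$ against $\hm$ — so that it becomes
\[
\int_\rd \left( \int_\rd (x - \hx)^\tran\ \pi(dx \mid \hx) \right) \cdot f(\hx,\hm)\ \hm(d\hx).
\]
Since $\pi = (\Id,P)\sharp\hm$, for $\hm$-a.e. $\hx$ the conditional measure $\pi(dx\mid\hx)$ is the Dirac mass $\delta_{P(\hx)}$, so the inner integral is just $(P(\hx) - \hx)^\tran$. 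Thus everything comes down to showing
\[
\int_\rd (P(\hx) - \hx)^\tran \cdot f(\hx,\hm)\ \hm(d\hx) = 0.
\]

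To get this vanishing I would use the characterization of optimal transport maps from $\hm$: since $\hm$ is absolutely continuous and $\pi\in\Pi_o(\hm,m)$, Brenier's theorem (\cite[Theorem~6.2.4]{Ambrosio_2005}, already invoked in the text) gives that $P = \nabla\psi$ for a convex function $\psi$, or equivalently that $P - \Id \in \Tan(\hm)$ — the tangent space is exactly the $L_2(\hm)$-closure of gradients of test functions, and $P-\Id$ lies in its normal... wait — actually the relevant fact is the other one: for the \emph{identity} transport plan we need that $f(\cdot,\hm)$ is $L_2(\hm)$-orthogonal to $\Tan(\hm)^{\perp}$. The cleaner route is: $\hm$ is an equilibrium of~\eqref{eq.sys}, which by definition means $\div(f(x,\hm)\hm) = 0$ in the distributional sense, i.e. $\int_\rd \dx\varphi(x)\cdot f(x,\hm)\ \hm(dx) = 0$ for all $\varphi\in\cc(\rd)$. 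This says precisely that $f(\cdot,\hm)$ is orthogonal in $L_2(\rd,\hm;\rd)$ to every gradient of a test function, hence to all of $\Tan(\hm)$. On the other hand, $P - \Id$ is (a.e.) the gradient of the convex potential $\psi$ minus $\Id$, and $\nabla\psi \in \Tan(\hm)$ by \cite[Theorem~8.4.1/8.5.1]{Ambrosio_2005}; the $\Id$ component contributes $\int_\rd \hx^\tran f(\hx,\hm)\,\hm(d\hx)$, which one must also handle. Approximating $\hx\mapsto \hx$ by gradients of test functions (cut-offs of $\frac12\|x\|^2$) in $L_2(\hm)$ — legitimate because $\hm\in\pp_2$ so $\Id\in L_2(\hm)$ and $\Id\in\Tan(\hm)$ — shows that term also pairs to zero against $f(\cdot,\hm)$. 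Combining, $\int_\rd (P(\hx)-\hx)^\tran\cdot f(\hx,\hm)\,\hm(d\hx)=0$.

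So the key steps, in order, are: (i) reduce~\eqref{eq.diff.eq} to the vanishing of $\int(x-\hx)^\tran f(\hx,\hm)\,\pi$; (ii) disintegrate $\pi=(\Id,P)\sharp\hm$ against $\hx$ to rewrite this as $\int_\rd (P(\hx)-\hx)^\tran f(\hx,\hm)\,\hm(d\hx)$; (iii) use the equilibrium equation $\div(f(\cdot,\hm)\hm)=0$ to conclude $f(\cdot,\hm)\perp\Tan(\hm)$ in $L_2(\rd,\hm;\rds)$; (iv) invoke Brenier (via Assumption~\ref{ass.ac}) to write $P=\nabla\psi$ with $P\in\Tan(\hm)$ and note $\Id\in\Tan(\hm)$ as well, so $P-\Id\in\Tan(\hm)$, whence the pairing vanishes. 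The main obstacle I anticipate is step (iv): one has to be careful that $P$ itself, not merely $P-\Id$, lies in $\Tan(\hm)$ — this needs the absolute continuity of $\hm$ and the second-moment assumption — and that the distributional equilibrium identity, which a priori only tests against $\cc(\rd)$, extends by density to all of $\Tan(\hm)$, including the unbounded element $\Id$; the $p$-th / quadratic moment bound on $\hm$ and a standard truncation argument are what make that density step work.
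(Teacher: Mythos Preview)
Your proposal is correct and follows essentially the same route as the paper: reduce to the vanishing of $\int_\rd (P(\hx)-\hx)^\tran f(\hx,\hm)\,\hm(d\hx)$, use the equilibrium identity $\div(f(\cdot,\hm)\hm)=0$ to get $f(\cdot,\hm)\perp\Tan(\hm)$ in $L_2(\rd,\hm)$, and conclude from $P-\Id\in\Tan(\hm)$. The paper streamlines your step~(iv) by invoking \cite[Proposition~8.5.2]{Ambrosio_2005}, which gives $(P-\Id)\in\Tan(\hm)$ directly, so there is no need to argue separately that $P\in\Tan(\hm)$ and $\Id\in\Tan(\hm)$; your anticipated obstacle therefore disappears.
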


\begin{proof}
We add and subtract $\int_{\rd\times\rd} (x-\hx)^\tran \cdot f(\hx,\hm)\ \pi(d\hx dx)$ in the left-hand side of~\eqref{eq.diff.eq}.
Thus,
\begin{equation} \label{eq.diff.eq.left}
\begin{split}
\int\limits_{\rd\times\rd} (x-\hx)^\tran \cdot f(x,m)\ \pi(d\hx dx)
&= \int\limits_{\rd\times\rd} (x-\hx)^\tran \cdot \big(f(x,m)-f(\hx,\hm)\big)\ \pi(d\hx dx)\\
&\hspace{1cm}+ \int\limits_{\rd\times\rd} (x-\hx)^\tran \cdot f(\hx,\hm)\ \pi(d\hx dx).
\end{split}
\end{equation}
Now, let us consider the second term.
Due to the definition of the plan $\pi$, we have
\[
\int\limits_{\rd\times\rd} (x-\hx)^\tran \cdot f(\hx,\hm)\ \pi(d\hx dx)
= \int\limits_\rd (P(\hx)-\hx)^\tran \cdot f(\hx,\hm)\ \hm(d\hx).
\]
It follows from~\cite[Proposition~8.5.2]{Ambrosio_2005} that
\begin{equation} \label{eq.p.id.tan}
(P-\Id) \in \Tan(\hm),
\end{equation}
i.e., there exists a sequence $\varphi_n \in \cc(\rd)$ such that
\[
\nabla\varphi_n \to (P-\Id) \text{ in the } L_2(\rd,\hm;\rd) \text{ sense, as } n \to \infty.
\]
At the same time, due to the fact that $\hm$ is an equilibrium one has that
\[
\int_\rd \nabla\varphi_n(\hx) \cdot f(\hx,\hm)\ \hm(d\hx) = 0.
\]
Passing to the limits, we obtain
\[
\int_\rd (P(\hx)-\hx)^\tran \cdot f(\hx,\hm)\ \hm(d\hx) = 0.
\]
Thus,
\[
\int\limits_{\rd\times\rd} (x-\hx)^\tran \cdot f(\hx,\hm)\ \pi(d\hx dx) = 0.
\]
Together with~\eqref{eq.diff.eq.left}, it implies the conclusion of the Lemma.
\end{proof}

\begin{lmm} \label{lemma.neg.diff}
Let a function $f: \rd \times \pp_2(\rd) \to \rd$ satisfy Assumptions~\ref{ass.lip} and~\ref{ass.b.dif}, a measure $\hm \in \pp_2(\rd)$ satisfy Assumption~\ref{ass.ac},
and let the condition~\eqref{eq.neq.qform} holds for all $\xi \in \Tan(\hm)\setminus\{0\}$.
Then, for all $m \in \pp_2(\rd)$, one has that
\begin{equation} \label{eq.neg.diff}
\int\limits_{\rd\times\rd} (x-\hx)^\tran \cdot \big(f(x,m)-f(\hx,\hm)\big)\ \pi(d\hx dx) \leq 0,
\end{equation}
where $\pi = (\Id,P)\sharp \hm \in \Pi_o(\hm,m)$.
\end{lmm}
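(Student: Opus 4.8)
The plan is to reduce the left-hand side of~\eqref{eq.neg.diff} directly to the quadratic form in~\eqref{eq.neq.qform}, evaluated at the displacement map $\xi := P - \Id$. First I would use Assumption~\ref{ass.b.dif} to expand the integrand: since
\[
f(x,m) - f(\hx,\hm) = A(x - \hx) + B\Bigl(\int_\rd y\,m(dy) - \int_\rd \hy\,\hm(d\hy)\Bigr),
\]
and since $\pi = (\Id,P)\sharp\hm$ is concentrated on the graph of $P$, substituting $x = P(\hx)$ turns~\eqref{eq.neg.diff} into
\[
\int_\rd (P(\hx) - \hx)^\tran A (P(\hx) - \hx)\ \hm(d\hx) + \int_\rd (P(\hx) - \hx)^\tran\ \hm(d\hx) \cdot B \cdot \Bigl(\int_\rd y\,m(dy) - \int_\rd \hy\,\hm(d\hy)\Bigr).
\]

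Next I would rewrite the barycenter difference. Because $m = P\sharp\hm$ and $m \in \pp_2(\rd)$ (so that all the integrals below are finite, by Cauchy--Schwarz together with $P - \Id \in L_2(\rd,\hm;\rd)$), one has $\int_\rd y\,m(dy) = \int_\rd P(\hy)\,\hm(d\hy)$, hence $\int_\rd y\,m(dy) - \int_\rd \hy\,\hm(d\hy) = \int_\rd (P(\hy) - \hy)\,\hm(d\hy)$. Setting $\xi := P - \Id$, the whole expression becomes exactly
\[
\int_\rd \xi(\hx)^\tran \cdot A \cdot \xi(\hx)\ \hm(d\hx) + \int_\rd \xi(\hx)^\tran\ \hm(d\hx) \cdot B \cdot \int_\rd \xi(\hy)\ \hm(d\hy),
\]
which is the left-hand side of~\eqref{eq.neq.qform}.

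Finally I would invoke~\eqref{eq.p.id.tan}, i.e.\ \cite[Proposition~8.5.2]{Ambrosio_2005}, which guarantees $\xi = P - \Id \in \Tan(\hm)$. If $\xi \neq 0$ in $L_2(\rd,\hm;\rd)$, the hypothesis~\eqref{eq.neq.qform} applies verbatim and yields the bound $\leq 0$. If $\xi = 0$, then $P = \Id$ holds $\hm$-a.e., so $m = \hm$, the integrand in~\eqref{eq.neg.diff} vanishes, and the inequality holds as an equality. This case split is the only genuine subtlety; the rest is bookkeeping, and I expect no real obstacle. In essence the lemma is the observation that the displacement map of the optimal plan lies in the tangent space and that, under linearity with separated variables, the ``production'' quantity $\int (x-\hx)^\tran(f(x,m)-f(\hx,\hm))\,\pi(d\hx dx)$ is precisely the tangent-space quadratic form whose sign is controlled by~\eqref{eq.neq.qform}.
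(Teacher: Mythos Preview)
Your argument is correct and follows the same strategy as the paper: reduce the integral to the quadratic form in $\xi = P - \Id$ and invoke $(P-\Id)\in\Tan(\hm)$ via \cite[Proposition~8.5.2]{Ambrosio_2005}. The paper, however, reaches the same endpoint by a longer route: it splits $f(x,m)-f(\hx,\hm)$ into $\big(f(x,m)-f(\hx,m)\big)+\big(f(\hx,m)-f(\hx,\hm)\big)$ and then applies Lagrange's theorem in $x$ and Proposition~\ref{prp.flat.intr} in $m$ to recover the matrices $A$ and $B$; since $f$ is linear by Assumption~\ref{ass.b.dif}, these derivative computations are trivial, and your direct substitution of the explicit linear form is cleaner. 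You also make explicit the degenerate case $\xi=0$ (i.e.\ $m=\hm$), which the paper glosses over even though \eqref{eq.neq.qform} is only assumed on $\Tan(\hm)\setminus\{0\}$; that case split is a small but genuine improvement in rigor.
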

\begin{proof}
Everywhere below, integration is understood in a component-by-component sense.
Let us consider the integral expression of the left-hand side of the inequality~\eqref{eq.diff.eq}.
\begin{equation} \label{eq.orig}
(x-\hx)^\tran \cdot \big(f(x,m)-f(\hx,\hm)\big)
= (x-\hx)^\tran \cdot  \big(f(x,m)-f(\hx,m)\big)
+ (x-\hx)^\tran \cdot  \big(f(\hx,m)-f(\hx,\hm)\big).
\end{equation}
We will analyze the terms of the right-hand of this equality side separately.
The first term due to the Lagrange's theorem takes the form
\[
\begin{split}
(x-\hx)^\tran \cdot \big(f(x,m)-f(\hx,m)\big)
&= (x-\hx)^\tran \cdot \int_0^1 \dx f(\hx + q (x-\hx),m)\ dq \cdot (x-\hx)\\
&= (x-\hx)^\tran \cdot A \cdot (x-\hx).
\end{split}
\]
Proposition~\ref{prp.flat.intr} implies that second term can be rewritten as
\[
\begin{split}
(x-\hx) &\cdot \big(f(\hx,m)-f(\hx,\hm)\big) \\
&= (x-\hx)^\tran \cdot \int_0^1 \int\limits_{\rd\times\rd} \int_0^1 \dm f(\hx,(1-s)\hm+sm,\hy+q(y-\hy))\ dq \\
&\hspace{9cm}\cdot (y-\hy)\ \pi(d\hy dy)\ ds \\
&= \int\limits_{\rd\times\rd} (x-\hx)^\tran \cdot B \cdot (y-\hy)\ \pi(d\hy dy).
\end{split}
\]
The last transition is valid by virtue of~\cite[Proposition~A.2]{Averboukh_2022_arxiv}.

Thus, integrating equality~\eqref{eq.orig} w.r.t. the measure $\pi(d\hx dx)$, we obtain
\[
\begin{split}
\int\limits_{\rd\times\rd} &(x-\hx)^\tran \cdot \big(f(x,m)-f(\hx,\hm)\big)\ \pi(d\hx dx)\\
&= \int\limits_{\rd\times\rd} (x-\hx)^\tran \cdot A \cdot (x-\hx)\ \pi(d\hx dx)\\
&\hspace{1cm}+ \int_\rd (x-\hx)^\tran\ \pi(d\hx dx) \cdot B \cdot \int_\rd (y-\hy)\ \pi(d\hy dy).
\end{split}
\]
Using the form of the plan $\pi$ and optimality of the transport $P$, we obtain the following equation:
\[
\begin{split}
\int\limits_{\rd\times\rd} &(x-\hx)^\tran \cdot  \big(f(x,m)-f(\hx,\hm)\big)\ \pi(d\hx dx)\\
&= \int_\rd (P(\hx)-\hx)^\tran \cdot A \cdot (P(\hx)-\hx)\ \hm(d\hx)\\
&\hspace{1cm}+ \int_\rd (P(\hx)-\hx)^\tran\ \hm(d\hx) \cdot B \cdot \int_\rd (P(\hy)-\hy)\ \hm(d\hy).
\end{split}
\]
Due to~\eqref{eq.neq.qform} and~\eqref{eq.p.id.tan} right-hand side of~\eqref{eq.orig} is non-positive.
\end{proof}

Now we give the proof of the theorem.

\begin{proof}[Proof of the Theorem~\ref{thm.first.method}]
Let us check the conditions of Theorem~\ref{thm.second.method} and show that the function
\[
\phi(m) = \frac12 W_2(m,\hm)
\]
is a superdifferentiable Lyapunov function.
Due to the assumptions of Theorem it is suffices to check only the condition for the superdifferential of the function $\phi$.

Lemma~\ref{prp.w22.supd} states that $\supd\phi(m)$ contains a function $\hpi^1(\cdot)$ where $\pi \in \Pi_o(m,\hm)$.
Let us consider the integral
\[
\int_\rd \hpi^1(x) \cdot f(x,m)\ m(dx)
\]
where $m \in \pp_2(\rd)$.
Using the definition of $\hpi^1$, we have that
\[
\begin{split}
\int\limits_\rd \hpi^1(x) \cdot f(x,m)\ m(dx)
&= \int\limits_\rd \left( \int_\rd (x-\hx)^\tran\ \pi(d\hx|x) \cdot f(x,m) \right)\ m(dx)\\
&= \int_\rd \int_\rd (x-\hx)^\tran \cdot f(x,m)\ \pi(d\hx|x)\ m(dx)\\
&= \int\limits_{\rd\times\rd} (x-\hx)^\tran \cdot f(x,m)\ \pi(d\hx dx).
\end{split}
\]
Lemmas~\ref{lemma.diff.eq} and~\ref{lemma.neg.diff} imply
\[
\int\limits_\rd \hpi^1(x) \cdot f(x,m)\ m(dx)
= \int\limits_{\rd\times\rd} (x-\hx)^\tran \cdot f(x,m)-f(\hx,\hm)\ \pi(d\hx dx) \leq 0.
\]
Thus, due to Theorem~\ref{thm.second.method}, the equilibrium $\hm$ is stable.
\end{proof}
%======================================================================================
\subsection{Example: A mean field system of pendulums}
%======================================================================================
Let $d=2n$ and $x_1,x_2 \in \rn$.
Denote
\begin{itemize}
\item $\ds x = \begin{pmatrix}
x_1\\
x_2
\end{pmatrix}$;
\item $\ds H(x) = \frac{x_1^2}2 + \frac{x_2^2}2$;
\item $\ds H_1 = H_{x_1}$, $H_2 = H_{x_2}$;
\item $\ds A = \begin{pmatrix}
\mathbb{O} & \mathbb{I}\\
-\mathbb{I} & \mathbb{O}\end{pmatrix}$,
\end{itemize}
where $\mathbb{O}$ and $\mathbb{I}$ are zero and identical matrices with dimension $n \times n$ respectively.
Notice that
\[
Ax = \begin{pmatrix}
H_1(x)\\
-H_2(x)
\end{pmatrix}
= \begin{pmatrix}
x_2\\
-x_1
\end{pmatrix}.
\]
If $n > 1$, then a vector composed of multidimensional quantities is interpreted as a vertical concatenation, squaring as a scalar square, $H_1$ (respectively $H_2$) denotes the vector of partial derivatives of $H$ w.r.t. the components of $x_1$ (respectively $x_2$).

Consider a system
\begin{equation} \label{eq.ex.osc}
\dot{x} = Ax + \int_\rd By\ m(dy) \triangleq f(x,m),
\end{equation}
where $B$ is a negative-definite matrix.
Notice that first term in this equation describes the free movement of a pendulum, while the second one shows an influence of all other pendulums on the given one.
Thus, regard dynamics~\eqref{eq.ex.osc} as a mean field system of pendulums.
The continuity equation corresponding to this system takes the form:
\begin{equation} \label{eq.1.ex.cont}
\partial_t m_t + \div \left( \left(Ax + \int_\rd By\ m_t(dy)\right) m_t \right)
= 0.
\end{equation}

Let $\hm$ be a Gibbs measure generated by the Hamiltonian $H(x)$.
Recall, that it has a density
\[
g(x) \triangleq \frac1{Z(\beta)}e^{-\beta H(x)},
\]
where $Z(\beta)$ stands for the normalizing coefficient, and that its first moment is equal to zero.
Now, we show that the measure $\hm$ is an equilibrium for equation~\eqref{eq.1.ex.cont}, i.e.,
\[
\int_\rd \dx \varphi(x) \cdot \left( Ax + \int_\rd By\ \hm(dy) \right)\ \hm(dx)
= 0.
\]
Due to the described properties of the Gibbs measure, it suffices to prove that
\begin{equation} \label{eq.ex.hibbs}
\int_\rd \dx (\varphi(x) \cdot Ax) \cdot g(x)\ dx
= 0.
\end{equation}
Integrating the left-hand side by parts, we arrive to the equality
\[
\int_\rd (\dx \varphi(x) \cdot Ax) \cdot g(x)\ dx
= \int_\rd \divx \big( Ax \cdot g(x) \big) \varphi(x)\ dx.
\]
The Gibbs measure is invariant w.r.t. the Hamiltonian dynamics.
Therefore, its density satisfies the Liouville equation (see, for example, ~\cite{Kozlov_2002}):
\[
\divx \big( Ax \cdot g(x) \big) = 0.
\]
This implies~\eqref{eq.ex.hibbs}.

Let us check the conditions of Theorem~\ref{thm.first.method}.
For an arbitrary nonzero integrable function $\xi : \rd \to \rd$, we have that
\[
\begin{split}
\int_\rd \xi(\hx)^\tran &\cdot A \cdot \xi(\hx)\ \hm(d\hx)\\
&\hspace{1cm}+ \int_\rd \xi(\hx)^\tran\ \hm(d\hx) \cdot B \cdot \int_\rd \xi(\hy)\ \hm(d\hy)\\
&=\int_\rd \xi(\hx)^\tran\ \hm(d\hx) \cdot B \cdot \int_\rd \xi(\hy)\ \hm(d\hy).
\end{split}
\]
Due to the negative-definiteness of the matrix $B$, this expression is negative.

Hence, the equilibrium $\hm$ and the function $f$ satisfy the conditions of Theorem~\ref{thm.first.method}.
This yields the stability of $\hm$.
%======================================================================================
\section{Conclusion}
%======================================================================================
In this paper, we consider an autonomous dynamic system in the space of probabilistic measures defined by a non-local continuity equation.
Sufficient conditions for the local stability based on the second Lyapunov method have been studied.
The above results can be extended to such issues of significant interest as trajectory stability analysis and global stability problems.
%======================================================================================
\appendix
%======================================================================================
\section{Properties of solutions of non-local continuity equation} \label{app.traj}
%======================================================================================
\begin{proof}[Proof of the Proposition~\ref{prp.bound.sol}]
By the definition of the solution of system~\eqref{eq.sys.x}, we have that
\[
\traj{0}{x_*}{t}
= x_*
+ \int\limits_0^t f(\traj{0}{x_*}{\tau},m_\tau)\ d\tau.
\]
Evaluating the $L_p(\rd,m_*;\rd)$-norm of both sides and applying the Minkowski inequality to right-hand side twice, we arrive to the estimate
\begin{equation} \label{eq.bound.sol}
\begin{split}
\left( \int_\rd \|\traj{0}{x_*}{t}\|^p\ m_*(dx_*) \right)^{1/p}
&\leq \left( \int_\rd \|x_*\|^p\ m_*(dx_*) \right)^{1/p}\\
&\hspace{1cm}+ \int\limits_0^t \left( \int_\rd \|f(\traj{0}{x_*}{\tau},m_\tau)\|^p\ m_*(dx_*)\right)^{1/p} d\tau.
\end{split}
\end{equation}
Furthermore, Remark~\ref{ass.ulin} and the triangle inequality, we have that
\[
\begin{split}
\left( \int_\rd \|f(\traj{0}{x_*}{\tau},m_\tau)\|^p\ m_*(dx_*)\right)^{1/p}
&\leq \left( \int_\rd C_1^p(1 + \|\traj{0}{x_*}{\tau}\| + \varsigma_p(m_\tau))^p\ m_*(dx_*)\right)^{1/p}\\
&\leq C_1 \left(1 + \left( \int_\rd \|\traj{0}{x_*}{\tau}\|^p\ m_*(dx_*) \right)^{1/p} + \varsigma_p(m_\tau) \right).
\end{split}
\]

Now let us introduce a map
\[
\chi_2: z \mapsto \traj{0}{z}{s}.
\]
Then, using \cite[Proposition~8.1.8]{Ambrosio_2005}, we conclude that
\[
\begin{split}
\left( \int_\rd \|\traj{0}{x_*}{s}\|^p\ m_*(dx_*) \right)^{1/p}
&= \left( \int_\rd \|x_*\|^p\ \big(\chi_2\sharp m_*\big)(dx_*) \right)^{1/p}\\
&= \left( \int_\rd \|x_*\|^p\ m_s(dx_*) \right)^{1/p}
= \varsigma_p(m_s)
\end{split}
\]
for each time $s$.

Hence, inequality~\eqref{eq.bound.sol} leads the following one:
\[
\varsigma_p(m_t)
\leq \varsigma_p(m_*)
+ C_1 \cdot t + \int\limits_0^t 2 C_1 \cdot \varsigma_p(m_\tau)\ d\tau.
\]
Finally, applying the Gronwall-Bellman inequality in the integral form and using the conditions of the Proposition imposed on $\alpha$ and $T$, we obtain
\[
\varsigma_p(m_t) \leq (C_1 \cdot T + \alpha) \cdot e^{2 C_1 \cdot T}.
\]
The last estimate depends only on the constant $\alpha$ and the final time $T$.
\end{proof}

\begin{prp} \label{prp.bound.traj}
Let $T > 0$, $m_*$ and $\alpha$ be such that $\varsigma_p(m_*) \leq \alpha$ and let $m_\cdot \in \Gamma_T(\pp_p(\rd))$ be a solution of the initial value problem~\eqref{eq.sys},~\eqref{eq.sys.data}.
Then, there exists a function $G_3(T)$ such that, for all $0 \leq s \leq r \leq T$ and for each $z \in \rd$, the following inequality holds:
\[
\|\traj{s}{z}{r} - z\|
\leq G_3(T) \cdot (1 + \|z\| + G_1(T,\alpha)) \cdot (r-s).
\]
\end{prp}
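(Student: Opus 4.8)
The plan is to estimate directly from the integral form of the characteristic equation defining $\traj{s}{z}{r}$, combining the sublinear growth of $f$ (Remark~\ref{ass.ulin}) with the moment bound already established in Proposition~\ref{prp.bound.sol}. For fixed $z \in \rd$ and $s \in [0,T]$, the curve $t \mapsto \traj{s}{z}{t}$ solves the characteristic ODE with datum $z$ at time $s$, so
\[
\traj{s}{z}{r} - z = \int_s^r f\big(\traj{s}{z}{t}, m_t\big)\, dt ,
\]
and hence, taking norms and using Remark~\ref{ass.ulin} together with $\varsigma_p(m_t) \le G_1(T,\alpha)$,
\[
\|\traj{s}{z}{r} - z\| \le C_1 \int_s^r \big( 1 + \|\traj{s}{z}{t}\| + G_1(T,\alpha)\big)\, dt .
\]
Thus everything reduces to an a priori bound on $\|\traj{s}{z}{t}\|$ that is uniform for $0 \le s \le t \le T$ and affine in $\|z\|$ and $G_1(T,\alpha)$.

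To obtain such a bound I would apply the same computation with $r$ replaced by an arbitrary $t \in [s,T]$, giving
\[
\|\traj{s}{z}{t}\| \le \|z\| + C_1\big(1 + G_1(T,\alpha)\big)(t-s) + C_1 \int_s^t \|\traj{s}{z}{\sigma}\|\, d\sigma ,
\]
and then invoke the Gronwall--Bellman inequality in integral form. This yields
\[
\|\traj{s}{z}{t}\| \le \big( \|z\| + C_1 T \big( 1 + G_1(T,\alpha)\big)\big) e^{C_1 T} \le (1 + C_1 T) e^{C_1 T}\, \big(1 + \|z\| + G_1(T,\alpha)\big),
\]
where the last step merely bounds the bracket by a multiple of $1 + \|z\| + G_1(T,\alpha)$.

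Substituting this back into the first estimate, the integrand is dominated by a constant multiple of $1 + \|z\| + G_1(T,\alpha)$, with a constant depending only on $C_1$ and $T$, and the integration over $[s,r]$ supplies the factor $r-s$. Collecting constants gives the claim with, for instance, $G_3(T) \triangleq C_1\big(2 + (1 + C_1 T)e^{C_1 T}\big)$. There is no genuine obstacle here; the only point requiring care is the bookkeeping of constants: one must check that every intermediate constant is independent of $s$ and $z$ (they are, since $C_1$ is the sublinear-growth constant of Remark~\ref{ass.ulin} and $G_1(T,\alpha)$ depends only on $T$ and $\alpha$) and then bundle them so that the final bound has exactly the announced form $G_3(T)\cdot(1 + \|z\| + G_1(T,\alpha))\cdot(r-s)$.
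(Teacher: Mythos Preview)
Your argument is correct and follows essentially the same route as the paper: integral form of the characteristic, sublinear growth of $f$, the moment bound from Proposition~\ref{prp.bound.sol}, and Gronwall. The only cosmetic difference is that the paper applies Gronwall directly to $r\mapsto\|\traj{s}{z}{r}-z\|$ (after writing $\|\traj{s}{z}{\tau}\|\le\|z\|+\|\traj{s}{z}{\tau}-z\|$), which avoids the intermediate bound on $\|\traj{s}{z}{t}\|$ and gives the slightly smaller constant $G_3(T)=C_1 e^{C_1 T}$.
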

\begin{proof}
By the definition of the solution, we have
\[
\traj{s}{z}{r}
= z + \intsr f(\traj{s}{z}{\tau},m_\tau) d\tau.
\]
Remark~\ref{ass.ulin} gives that
\[
\begin{split}
\|\traj{s}{z}{r} - z\|
&= \left\| \intsr f(\traj{s}{z}{\tau},m_\tau) d\tau \right\|\\
&\leq \intsr \| f(\traj{s}{z}{\tau},m_\tau)\| d\tau
\leq C_1 \intsr (1 + \|\traj{s}{z}{\tau}\| + \varsigma(m_\tau)) d\tau.
\end{split}
\]
Now we add and subtract $z$ to $\traj{s}{z}{\tau}$ at the right-hand side.
Furthermore, using the triangle inequality and Proposition~\ref{prp.bound.sol}, we obtain
\[
\begin{split}
\|\traj{s}{z}{r} - z\|
&\leq C_1 \intsr (1 + \|z\| + \varsigma(m_\tau)) d\tau + C_1 \intsr \|\traj{s}{z}{\tau}-z\| d\tau\\
&\leq C_1 (1 + \|z\| + G_1(T,\alpha)) \cdot (r-s) + C_1 \intsr \|\traj{s}{z}{\tau}-z\| d\tau.
\end{split}
\]
Applying the Gronwall-Bellman in the integral form, we arrive to the estimate
\[
\begin{split}
\|\traj{s}{z}{r} - z\|
&\leq C_1 (1 + \|z\| + G_1(T,\alpha)) \cdot (r-s) e^{\intsr C_1\ d\tau}\\
&\leq C_1 \cdot e^{C_1 \cdot T} \cdot (1 + \|z\| + G_1(T,\alpha)) \cdot (r-s).
\end{split}
\]
The first and second terms here depend only on the final time~$T$.
\end{proof}

\begin{prp} \label{prp.bound.w2sol}
Let $T > 0$, $m_*$ and $\alpha$ be such that $\varsigma_p(m_*) \leq \alpha$ and let $m_\cdot \in \Gamma_T(\pp_p(\rd))$ be a solution of the initial value problem~\eqref{eq.sys},~\eqref{eq.sys.data}.
Then, there exists a function $G_4(T,\alpha)$ such that, for all $0 \leq s \leq r \leq T$, the following inequality holds:
\[
W_p(m_s,m_r) \leq  G_4(T,\alpha) \cdot (r-s).
\]
\end{prp}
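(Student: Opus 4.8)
The plan is to bound the Wasserstein distance $W_p(m_s,m_r)$ by constructing an explicit transport plan between $m_s$ and $m_r$ coming from the flow map of the characteristic ODE. Recall that $m_r = \mathsf{X}^{s,\cdot}_{m_\cdot}(r)\sharp m_s$, since the solution is transported along characteristics (this is precisely the Kantorovich representation in Definition~\ref{dfn.sol.kant}, restarted at time $s$). Therefore the map $z \mapsto \bigl(z, \mathsf{X}^{s,z}_{m_\cdot}(r)\bigr)$ pushes $m_s$ forward to an admissible plan in $\Pi(m_s,m_r)$, which gives the estimate
\[
W_p(m_s,m_r)^p \leq \int_\rd \bigl\| \mathsf{X}^{s,z}_{m_\cdot}(r) - z \bigr\|^p\ m_s(dz).
\]

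Next I would insert the pointwise bound from Proposition~\ref{prp.bound.traj}, namely $\|\mathsf{X}^{s,z}_{m_\cdot}(r) - z\| \leq G_3(T)\cdot(1 + \|z\| + G_1(T,\alpha))\cdot(r-s)$. Raising to the $p$-th power and integrating against $m_s$, the factor $(r-s)^p$ pulls out, and one is left with $\int_\rd \bigl(1 + \|z\| + G_1(T,\alpha)\bigr)^p\ m_s(dz)$. By the triangle inequality in $L_p(\rd,m_s;\rr)$ (or the elementary convexity estimate $(a+b+c)^p \le 3^{p-1}(a^p+b^p+c^p)$), this integral is controlled by a constant depending on $\varsigma_p(m_s)$ and on $G_1(T,\alpha)$; and Proposition~\ref{prp.bound.sol} bounds $\varsigma_p(m_s)$ by $G_1(T,\alpha)$ uniformly in $s\in[0,T]$. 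Collecting everything and taking $p$-th roots yields
\[
W_p(m_s,m_r) \leq G_3(T)\cdot\Bigl(1 + 2 G_1(T,\alpha)\Bigr)\cdot(r-s) \cdot (\text{universal const}),
\]
so one defines $G_4(T,\alpha)$ to be this product, which depends only on $T$ and $\alpha$ as required.

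The only mild subtlety — the step I would be most careful about — is the justification that $m_r$ is the pushforward of $m_s$ under the flow map $\mathsf{X}^{s,\cdot}_{m_\cdot}$ rather than just under $\mathsf{X}^{0,\cdot}_{m_\cdot}$; this follows from uniqueness of characteristics (the vector field $f(\cdot,m_t)$ is Lipschitz by Assumption~\ref{ass.lip} once the curve $m_\cdot$ is frozen) together with the flow property $\mathsf{X}^{0,z}_{m_\cdot}(r) = \mathsf{X}^{s,\mathsf{X}^{0,z}_{m_\cdot}(s)}_{m_\cdot}(r)$, combined with \cite[Proposition~8.1.8]{Ambrosio_2005} as already used in the proof of Proposition~\ref{prp.bound.sol}. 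Everything else is a routine chain of Minkowski/Jensen inequalities, so no genuine obstacle arises; the proof is essentially immediate given Propositions~\ref{prp.bound.sol} and~\ref{prp.bound.traj}.
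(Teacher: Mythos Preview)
Your proposal is correct and follows essentially the same argument as the paper: construct the transport plan $\pi=(\Id,\traj{s}{\cdot}{r})\sharp m_s$, bound $W_p(m_s,m_r)$ by the $L_p(m_s)$-norm of $\traj{s}{z}{r}-z$, apply Proposition~\ref{prp.bound.traj} pointwise, and then use Minkowski together with Proposition~\ref{prp.bound.sol} to control $\varsigma_p(m_s)$. The paper obtains $G_4(T,\alpha)=G_3(T)\cdot(1+2G_1(T,\alpha))$ via the triangle inequality in $L_p$, so your ``universal constant'' is in fact $1$ if you take that route.
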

\begin{proof}
By the definition
\[
W_p(m_s,m_r)
= \left( \inf\limits_{\pi \in \Pi(m_s,m_r)} \int_\rd \|x - y\|^p\ \pi(dxdy) \right)^{1/p}
\]
Now, let us introduce a map
\[
\chi_3: z \mapsto (z,\ \traj{s}{z}{r})
\]
and choose a plan defined by the rule
\[
\pi = \chi_3\sharp m_s.
\]
Then,
\[
W_p(m_s,m_r)
\leq \left( \int_\rd \|x - y\|^p\ \big( \chi_3\sharp m_s \big)(dxdy) \right)^{1/p}
= \left( \int_\rd \|z - \traj{s}{z}{r}\|^p\ m_s(dz) \right)^{1/p}.
\]
Due to Propositions~\ref{prp.bound.sol},~\ref{prp.bound.traj} and Minkowski inequality, we have that
\[
W_p(m_s,m_r)
\leq G_3(T) \cdot (r-s) \cdot (1 + \varsigma_p(m_s) + G_1(T,\alpha))
\leq G_3(T) \cdot (r-s) \cdot (1 + 2G_1(T,\alpha)).
\]
The last estimate depends only on the constant $\alpha$ and the final time $T$.
\end{proof}

\begin{proof}[Proof of the Proposition~\ref{prp.bound.f}]
Due to Remark~\ref{ass.lip}, Proposition~\ref{prp.bound.traj} and Proposition~\ref{prp.bound.w2sol}, we have that
\[
\begin{split}
\|f(\traj{s}{x}{r},m_r) - f(x,m_s)\|
&\leq C_0 (\|\traj{s}{x}{r} - x\| + W_p(m_s,m_r)) \\
&\leq C_0(G_3(T) \cdot (1 + \|x\| + G_1(T,\alpha)) + G_4(T,\alpha)) \cdot (r-s).
\end{split}
\]
Evaluating the $L_p(\rd,m_s;\rd)$-norm of a both sides and applying Minkowski inequality with Proposition~\ref{prp.bound.sol} to the right-hand side, we arrive to the inequality
\[
\begin{split}
&\left( \int_\rd \|f(\traj{s}{x}{r},m_r) - f(x,m_s)\|^p\ m_s(dx) \right)^{1/p}\\
&\hspace{2cm}\leq C_0 \cdot \left( G_3(T) \cdot \left(1 + \left( \int_\rd \|x\|^p\ m_s(dx) \right)^{1/p} + G_1(T,\alpha) \right) + G_4(T,\alpha) \right) \cdot (r-s)\\
&\hspace{2cm}\leq C_0 \cdot \left( G_3(T) \cdot \left(1 + 2G_1(T,\alpha) \right) + G_4(T,\alpha) \right) \cdot (r-s).
\end{split}
\]
The last estimate depends only on the constant $\alpha$ and the final time $T$.
\end{proof}
%======================================================================================
\section{Properties of derivatives and superdifferentials in the space of probability measures} \label{app.diff}
%======================================================================================
\subsection{Properties of the inner derivative}
%======================================================================================
\begin{proof}[Proof of the Proposition~\ref{prp.flat.intr}]
First, notice that the left-hand side of the~\eqref{eq.flat.intr} can be rewritten as
\[
\int_\rd \frac{\delta \Phi}{\delta m}((1-s)m_*+sm,y)\ m(dy)
- \int_\rd \frac{\delta \Phi}{\delta m}((1-s)m_*+sm,y_*)\ m_*(dy_*).
\]
Here we denoted the integration variable in the second term by $y_*$.
The first term can be transformed in the following form:
\[
\begin{split}
\int_\rd \frac{\delta \Phi}{\delta m}(&(1-s)m_*+sm,y)\ m(dy)\\
&= \int_\rd \int_\rd \frac{\delta \Phi}{\delta m}((1-s)m_*+sm,y)\ \pi(dy_*|y)\ m(dy)\\
&= \int_\rd \frac{\delta \Phi}{\delta m}((1-s)m_*+sm,y)\ \pi(dy_*dy).
\end{split}
\]
Similarly, the second term takes the form
\[
\begin{split}
\int_\rd \frac{\delta \Phi}{\delta m}(&(1-s)m_*+sm,y_*)\ m_*(dy_*)\\
&= \int_\rd \int_\rd \frac{\delta \Phi}{\delta m}((1-s)m_*+sm,y_*)\ \pi(dy|y_*)\ m_*(dy_*)\\
&= \int_\rd \frac{\delta \Phi}{\delta m}((1-s)m_*+sm,y_*)\ \pi(dy_*dy).
\end{split}
\]
Thus, left-hand side of the~\eqref{eq.flat.intr} can be rewritten as
\[
\begin{split}
\int\limits_{\rd\times\rd} \left( \frac{\delta \Phi}{\delta m}((1-s)m_*+sm,y)
- \frac{\delta \Phi}{\delta m}((1-s)m_*+sm,y_*) \right)\ \pi(dy_*dy).
\end{split}
\]
Due to the Lagrange theorem, we have that
\[
\begin{split}
\frac{\delta \Phi}{\delta m}((1-s)m_*+sm,y)
&- \frac{\delta \Phi}{\delta m}((1-s)m_*+sm,y_*)\\
&= \int_0^1 \nabla_y \frac{\delta \Phi}{\delta m}((1-s)m_*+sm,y_*+q(y-y_*))\ dq \cdot (y-y_*) \\
&= \int_0^1 \dm \Phi((1-s)m_*+sm,y_*+q(y-y_*))\ dq \cdot (y-y_*).
\end{split}
\]
Substituting this expression to the previous formula we obtain Proposition~\ref{prp.flat.intr}.
\end{proof}

\begin{proof}[Proof of the Proposition~\ref{prp.intr.to.bar}]
Due to~\cite[Proposition~2.2.3]{Cardaliaguet_2019}, we have that
\[
\lim\limits_{\tau \to 0} \frac{\phi((\Id+\tau b)\sharp m) - \phi(m)}\tau
= \int_\rd \dm \phi(m,y) \cdot b(y)\ m(dy).
\]
This is equivalent to
\[
\frac{\phi((\Id+\tau b)\sharp m) - \phi(m)}\tau
= \int_\rd \dm \phi(m,y) \cdot b(y)\ m(dy) + \xi(\tau),
\]
where $\xi:\rr \to \rr$ is such that $\xi(\tau) \to 0$ as $\tau \to 0$.
Then
\[
\phi((\Id+\tau b)\sharp m) - \phi(m)
= \int_\rd \dm \phi(m,y) \cdot \tau b(y)\ m(dy) + \tau \xi(\tau).
\]
\end{proof}
%======================================================================================
\subsection{Strong Frechet and barycentric superdifferentials}
%======================================================================================
In this section, we assume that
\begin{itemize}
\item $X$ is Banach space;
\item $q = p' = \frac{p}{p-1}$.
\end{itemize}

\begin{dfn}
\[
\pp_{pq}(X \times X) \triangleq \{ \nu \in \pp(X \times X):\ \proj^1\sharp\nu \in \pp_p(X), \proj^2\sharp\nu \in \pp_q(X) \}
\]
\end{dfn}

\begin{dfn}
We say that $\mbmu \in \pp(X \times X \times X)$ is \dfnem{3-plan} for measures $\nu \in \pp_{pq}(X \times X)$ and $\mu^3 \in \pp_p(X)$ if
\[
\begin{split}
\proj^{1}\sharp\nu &= \mu_1,\\
\proj^{1,2}\sharp\mbmu &= \nu,\\
\proj^{1,3}\sharp\mbmu &\in \Pi(\mu_1,\mu_3),
\end{split}
\]
where $\mu_1 \in \pp_p(X)$.

The set of all 3-plans will be denoted by $\Pi^3(\nu,\mu^3)$.
\end{dfn}

\begin{dfn}[{\cite[Definition~10.3.1]{Ambrosio_2005}}] \label{dfn.supd.ambrosio}
Let $\mu_1 \in \pp_p(X)$ and a functional $\phi : \pp_p(X) \to \rr$ is upper semi-continuous.
Then, a measure $\alpha \in \pp_{pq}(X \times X)$ is an element of the \dfnem{strong Frechet superdifferential} $\partial^+ \phi(\mu^1)$ of the function $\phi$ at the point $\mu^1$ if for every measure $\mu^3 \pp_p(X)$ and any 3-plan $\mbmu \in \Pi^3(\alpha,\mu^3)$, there exists a function $\zeta: \rr_+ \to \rr$ such that
\begin{itemize}
\item $\zeta(\tau) \to 0$ as $\tau \to 0$;
\item the following condition holds:
\[
\phi(\mu^3) - \phi(\mu^1)
\leq \int_{X^3} x_2^\tran \cdot (x_3-x_1)\ \mbmu(dx_1dx_2dx_3) + W_p(\mu^1,\mu^3) \zeta(W_p(\mu^1,\mu^3)).
\]
\end{itemize}
\end{dfn}

\begin{prp} \label{prp.ambr.to.bar}
Let $m \in \pp_2(X)$ and let a function $\phi: \pp_2(X) \to \rr$ have non-empty strong Frechet superdifferential $\partial^+\phi(m)$.
Then, for every $\alpha \in \partial^+\phi(m)$, its barycenter $\int_X x_2\ \alpha(dx_2|x_1)$ is an element of the barycentric superdifferential $\supd\phi(m)$.
\end{prp}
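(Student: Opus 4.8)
If $\alpha \in \partial^+\phi(m)$ for $\phi:\pp_2(X)\to\rr$ upper semi-continuous and $m\in\pp_2(X)$, then the function $\gamma(x_1) \triangleq \int_X x_2\ \alpha(dx_2|x_1)$ belongs to $\supd\phi(m)$.

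Here is the plan. The goal is to verify the defining inequality of the barycentric superdifferential (Definition~\ref{dfn.bd}) directly from the defining inequality of the strong Frechet superdifferential (Definition~\ref{dfn.supd.ambrosio}). So I fix an arbitrary $b\in L_2(X,m;X)$ and an arbitrary $\tau>0$, and I must produce a function $\xi(\tau)\to 0$ with
\[
\phi\big((\Id+\tau b)\sharp m\big) - \phi(m) \leq \tau\int_X \gamma(x)\cdot b(x)\ m(dx) + \tau\,\xi(\tau).
\]
First I would note that $\gamma\in L_q(X,m;X)$ is well defined: since $\proj^2\sharp\alpha\in\pp_q(X)$ and $\proj^1\sharp\alpha=m$, disintegration of $\alpha$ by the first variable gives the conditional measures $\alpha(\cdot\,|x_1)$, and Jensen's inequality plus Fubini bound $\|\gamma\|_{L_q(X,m;X)}\le\varsigma_q(\proj^2\sharp\alpha)<\infty$.

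The key construction is the choice of the target measure and the 3-plan. I would take $\mu^3 \triangleq (\Id+\tau b)\sharp m$ and build the 3-plan
\[
\mbmu \triangleq \big(\proj^1,\ \proj^2,\ \proj^1 + \tau\, b\circ\proj^1\big)\sharp\alpha \in \pp(X\times X\times X).
\]
One checks this is an admissible 3-plan in $\Pi^3(\alpha,\mu^3)$: $\proj^{1,2}\sharp\mbmu=\alpha$ by construction, $\proj^1\sharp\alpha=m=\mu_1$, and $\proj^{1,3}\sharp\mbmu = (\Id,\Id+\tau b)\sharp m\in\Pi(m,\mu^3)$. Feeding $\mbmu$ into the inequality of Definition~\ref{dfn.supd.ambrosio} and using the push-forward change of variables, the linear term becomes
\[
\int_{X^3} x_2^\tran\cdot(x_3-x_1)\ \mbmu(dx_1dx_2dx_3) = \tau\int_{X\times X} x_2^\tran\cdot b(x_1)\ \alpha(dx_1dx_2) = \tau\int_X \gamma(x_1)\cdot b(x_1)\ m(dx_1),
\]
the last step by disintegrating $\alpha$ over $x_1$ and recognizing the inner integral as $\gamma(x_1)\cdot b(x_1)$. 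For the remainder term, since $\proj^{1,3}\sharp\mbmu$ is itself a (not necessarily optimal) transport plan between $m$ and $\mu^3$, we have $W_2(m,\mu^3)\le \tau\|b\|_{L_2(X,m;X)}$, so setting $\xi(\tau)\triangleq \|b\|_{L_2(X,m;X)}\cdot\zeta\big(W_2(m,\mu^3)\big)$ — where $\zeta$ is the function supplied by Definition~\ref{dfn.supd.ambrosio} for this particular $\mu^3$ — gives $W_2(m,\mu^3)\zeta(W_2(m,\mu^3)) \le \tau\,\xi(\tau)$, and $\xi(\tau)\to 0$ as $\tau\to 0$ because $W_2(m,\mu^3)\to 0$. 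Assembling these pieces yields exactly the barycentric superdifferential inequality.

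The main subtlety I anticipate is bookkeeping around the disintegration and the definition of $q$: the paper's Definition~\ref{dfn.bd} is phrased with $q=p'$ and $\gamma\in L_q$, while this subsection works with $p=2$, so $q=2$ as well and $\alpha\in\pp_{22}(X\times X)$ — I should make sure the membership $\gamma\in L_q(X,m;X)$ is argued cleanly (the barycenter of a $\pp_q$-valued disintegration lands in $L_q$ by Jensen). A second point requiring a little care: Definition~\ref{dfn.supd.ambrosio} lets $\zeta$ depend on $\mu^3$, whereas Definition~\ref{dfn.bd} lets $\xi$ depend on $b$; since we fix $b$ first and then $\mu^3=(\Id+\tau b)\sharp m$ is determined by $b$ and $\tau$, there is no circularity, but the dependence of $\zeta$ must be tracked honestly so that $\xi(\tau)\to0$ genuinely holds. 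Everything else is a routine application of push-forward identities and the triangle-type estimate for $W_2$.
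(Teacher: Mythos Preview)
Your proof is correct and follows essentially the same route as the paper: set $\mu^3=(\Id+\tau b)\sharp m$, push $\alpha$ forward by $(\proj^1,\proj^2,\proj^1+\tau b\circ\proj^1)$ to obtain the 3-plan, bound $W_2(m,\mu^3)\le\tau\|b\|_{L_2}$, and define $\xi(\tau)=\|b\|_{L_2}\,\zeta(W_2(m,\mu^3))$ before disintegrating the linear term. Your additional care---verifying $\gamma\in L_q$ via Jensen, checking admissibility of the 3-plan, and flagging the dependence of $\zeta$ (which in the Ambrosio--Gigli--Savar\'e definition the paper cites is in fact uniform in $\mu^3$, resolving your concern)---only makes the argument more complete than the paper's version.
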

\begin{proof}
We choose $\alpha \in \partial^+\phi(m)$.
Given $i>0$ and $b \in L_2(X,\mu;X)$ we denote
\[
\begin{split}
\mu_1 &\triangleq m,\\
\mu_3 &\triangleq (\Id+\tau b)\sharp m,\\
\mbmu &\triangleq (\Id,\ \proj^1 + \tau b \circ \proj^1) \sharp \alpha.
\end{split}
\]
Then,
\[
W_p(\mu_1,\mu_3) \leq \tau \|b\|_{L_2(X,\mu_1;X)}.
\]
Further, we denote
\[
\xi(\tau) \triangleq \|b\|_{L_2(X,\mu_1;X)} \cdot \zeta(W_p(\mu_1,\mu_3)).
\]
Due to the construction, we have that $\xi(\tau) \to 0$ as $\tau \to 0$.
Therefore, by Definition~\ref{dfn.supd.ambrosio},
\[
\begin{split}
\phi(\mu_3) - \phi(\mu_1)
&\leq \int_{X^3} x_2^\tran \cdot (x_3-x_1)\ \mbmu(dx_1dx_2dx_3) + W_p(\mu_1,\mu_3) \zeta(W_p(\mu_1,\mu_3))\\
&= \int_{X^2} x_2^\tran \cdot \tau b(x_1)\ \alpha(dx_1dx_2) + \tau \xi(\tau).
\end{split}
\]
Finally, disintegrating the measure $\alpha$, we obtain
\[
\phi(\mu_3) - \phi(\mu_1)
\leq \int\limits_X \left( \int_X x_2^\tran\ \alpha(dx_2|x_1), \tau b(x_1) \right)\ \mu_1(dx_1) + \tau \xi(\tau).
\]
\end{proof}
%======================================================================================
\section{Boundedness of the barycentric differential} \label{app.bound}
%======================================================================================
In this section we analyze the uniform boundedness of barycentric superdifferentials in the case of $p = 2$.

\begin{prp}
Let $m \in \pp_2(\rd)$ and let $\alpha>0$ be such that $\varsigma_2(m) \leq \alpha$.
Assume, additionally, that $\phi : \pp_2(\rd) \to \rr$ is locally Lipschitz continuous and superdifferentiable at the point $m$.
Then, for all $\gamma \in \supd\phi(m)$, the following estimation holds:
\[
\|\gamma\|_{L_2(\rd,m;\rd)} \leq K_\alpha,
\]
where $K_\alpha$ is Lipschitz constant of the $\phi$ in the ball of the radius $\alpha$.
\end{prp}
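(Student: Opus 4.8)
The plan is to test the superdifferential inequality against the specific perturbation that points in the direction of $\gamma$ itself, which is the natural way to extract a bound on $\|\gamma\|_{L_2}$ from a one-sided estimate. Concretely, fix $\gamma \in \supd\phi(m)$ and take $b \triangleq \gamma^\tran \in L_2(\rd,m;\rd)$; note this is legitimate since, with $p=2$, we have $q=2$ as well, so $\gamma \in L_2(\rds,m;\rds)$ and its transpose lies in $L_2(\rd,m;\rd)$. By Definition~\ref{dfn.bd} there is a function $\xi$ with $\xi(\tau)\to 0$ as $\tau\to 0$ such that, for every $\tau>0$,
\[
\phi\big((\Id+\tau b)\sharp m\big) - \phi(m) \leq \tau\int_\rd \gamma(x)\cdot b(x)\ m(dx) + \tau\xi(\tau) = \tau\|\gamma\|_{L_2(\rd,m;\rds)}^2 + \tau\xi(\tau).
\]

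Next I would estimate the left-hand side from below using local Lipschitz continuity. By Proposition~\ref{prp.bound.mom}, $\varsigma_2((\Id+\tau b)\sharp m) \leq \varsigma_2(m) + \tau\|b\|_{L_2(\rd,m;\rd)} \leq \alpha + \tau\|\gamma\|_{L_2}$, so for $\tau$ small enough both $m$ and $(\Id+\tau b)\sharp m$ lie in a ball on which $\phi$ is $K$-Lipschitz for a constant $K$ close to $K_\alpha$ (here one uses the c\`adl\`ag, right-continuous choice of $K_\cdot$ to control the constant on the slightly larger ball as $\tau\downarrow 0$). Combining the Lipschitz estimate $|\phi((\Id+\tau b)\sharp m)-\phi(m)| \leq K\cdot W_2((\Id+\tau b)\sharp m,m)$ with the obvious bound $W_2((\Id+\tau b)\sharp m,m) \leq \tau\|b\|_{L_2(\rd,m;\rd)} = \tau\|\gamma\|_{L_2}$ (using the plan $(\Id,\Id+\tau b)\sharp m$), we get
\[
-K\tau\|\gamma\|_{L_2} \leq \tau\|\gamma\|_{L_2}^2 + \tau\xi(\tau).
\]
That inequality is the wrong sign, so instead I would apply the Lipschitz bound to the \emph{upper} estimate of the left side: actually the cleaner route is to combine $\phi((\Id+\tau b)\sharp m)-\phi(m) \leq \tau\|\gamma\|_{L_2}^2 + \tau\xi(\tau)$ with the reverse — no; the decisive observation is that we must bound $\phi((\Id+\tau b)\sharp m)-\phi(m)$ \emph{below} by $-K\tau\|\gamma\|_{L_2}$ and that gives nothing. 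The right move is the opposite perturbation: apply Definition~\ref{dfn.bd} and then bound the left-hand side \emph{above} by $K\tau\|\gamma\|_{L_2}$ is also the wrong direction.

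The correct argument: divide the superdifferential inequality by $\tau$ and let $\tau\downarrow 0$, obtaining
\[
\|\gamma\|_{L_2(\rd,m;\rds)}^2 \geq \limsup_{\tau\downarrow 0}\frac{\phi((\Id+\tau b)\sharp m)-\phi(m)}{\tau} \geq -\limsup_{\tau\downarrow 0}\frac{K\,W_2((\Id+\tau b)\sharp m,m)}{\tau} \geq -K\|\gamma\|_{L_2},
\]
which again is vacuous. So the genuine content must come from testing against $b = -\gamma^\tran$ as well, or rather: one tests the superdifferential inequality with $b$ arbitrary, writes $\phi((\Id+\tau b)\sharp m)-\phi(m) \geq -K\tau\|b\|_{L_2}$, hence $-K\tau\|b\|_{L_2} \leq \tau\int\gamma\cdot b\,m(dx) + \tau\xi(\tau)$, divide by $\tau$, let $\tau\downarrow 0$: $-K\|b\|_{L_2} \leq \int\gamma\cdot b\,m(dx)$ for all $b$. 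Replacing $b$ by $-b$ gives $\int\gamma\cdot b\,m(dx) \leq K\|b\|_{L_2}$ for all $b\in L_2(\rd,m;\rd)$, i.e. the linear functional $b\mapsto\int\gamma\cdot b\,m(dx)$ has norm at most $K$ on $L_2(\rd,m;\rd)$; choosing $b=\gamma^\tran$ yields $\|\gamma\|_{L_2}^2 \leq K\|\gamma\|_{L_2}$, hence $\|\gamma\|_{L_2}\leq K$. Finally, letting the Lipschitz constant $K$ on the shrinking family of balls tend to $K_\alpha$ (right-continuity of $\alpha\mapsto K_\alpha$) gives $\|\gamma\|_{L_2(\rd,m;\rd)}\leq K_\alpha$.

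The main obstacle, and the only subtle point, is the bookkeeping of the Lipschitz constant: the perturbed measure $(\Id+\tau b)\sharp m$ has second moment only bounded by $\alpha + \tau\|\gamma\|_{L_2}$, which strictly exceeds $\alpha$, so one cannot directly invoke $K_\alpha$; one works with $K_{\alpha+\tau\|\gamma\|_{L_2}}$ and passes to the limit using that $K_\cdot$ is right-continuous with a left limit and non-decreasing, so $K_{\alpha+\tau\|\gamma\|_{L_2}}\downarrow K_\alpha$ as $\tau\downarrow 0$. Everything else — the push-forward moment bound, the trivial transport plan giving $W_2((\Id+\tau b)\sharp m,m)\leq\tau\|b\|_{L_2}$, and the duality argument identifying the $L_2$-norm of $\gamma$ — is routine.
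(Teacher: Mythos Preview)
Despite the detours, your final argument is correct and is essentially the paper's proof. The paper simply takes $b=-\gamma^\tran$ from the outset in Definition~\ref{dfn.bd}, obtaining $\tau\|\gamma\|_{L_2}^2-\tau\xi(\tau)\leq \phi(m)-\phi((\Id-\tau\gamma^\tran)\sharp m)\leq K_{\alpha+\epsilon}\,\tau\|\gamma\|_{L_2}$, then divides by $\tau$, lets $\tau\to 0$ and finally $\epsilon\to 0$ using right-continuity of $K_\cdot$ --- which is precisely your ``replace $b$ by $-b$ and then choose $b=\gamma^\tran$'' compressed into a single choice, together with the same moment/right-continuity bookkeeping you describe.
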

\begin{proof}
We let in the definition of the superdifferential $b = -\gamma^\tran$.
Then, there exists a function $\xi: \rr \to \rr$ such that, for all $\tau > 0$,
\[
\phi((\Id - \tau \gamma^\tran)\sharp m) - \phi(m)
\leq - \tau \int_{\rd} \gamma^\tran(x) \cdot \gamma(x)\ m(dx) + \tau \xi(\tau)
= - \tau \| \gamma \|_{L_2(\rd,m;\rds)}^2 + \tau \xi(\tau).
\]
Using Proposition~\ref{prp.bound.mom}, we have that
\[
\forall \epsilon > 0\ \exists \theta > 0:\ \forall \tau \in [0,\theta]\ \ \varsigma_2((\Id - \tau \gamma)\sharp m) \leq \alpha+\epsilon.
\]
This and the local Lipschitz continuity imply the following estimate:
\[
\begin{split}
\tau \|\gamma\|_{L_2(\rd,m;\rds)}^2 - \tau\xi(\tau)
&\leq -\big( \phi((\Id - \tau \gamma^\tran)\sharp m) - \phi(m) \big)
\leq \big| \phi((\Id - \tau \gamma^\tran)\sharp m) - \phi(m) \big| \\
&\leq K_{\alpha+\epsilon} \cdot W_2((\Id - \tau \gamma^\tran)\sharp m,m)
\leq K_{\alpha+\epsilon} \cdot \tau \left( \int_{\rd} \|\gamma(x)\|^2 m(dx) \right)^{1/2}.
\end{split}
\]
Further, dividing by $\tau$ and letting $\tau \to 0$, we obtain
\[
\|\gamma\|_{L_2(\rd,m;\rds}^2 \leq K_{\alpha+\epsilon} \|\gamma\|_{L_2(\rd,m;\rds)}.
\]
Finally, letting $\epsilon \to 0$ and using the right continuous of the Lipschitz constant, we conclude that
\[
\|\gamma\|_{L_2(\rd,m;\rds)}^2 \leq K_\alpha \|\gamma\|_{L_2(\rd,m;\rds)}.
\]
\end{proof}

Similarly, an analogous statement can be proved for subdifferentials.

\begin{prp}
Let $m \in \pp_2(\rd)$ and let $\alpha>0$ be such that $\varsigma_2(m) \leq \alpha$.
Assume, additionally, that $\phi : \pp_2(\rd) \to \rr$ is locally Lipschitz continuous and subdifferentiable at the point $m$.
Then for all $\gamma \in \subd\phi(m)$ the following estimation holds:
\[
\|\gamma\|_{L_2(\rd,m;\rd)} \leq K_\alpha,
\]
where $K_\alpha$ is Lipschitz constant of the $\phi$ in the ball of the radius $\alpha$.
\end{prp}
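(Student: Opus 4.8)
The plan is to mirror the proof of the preceding proposition for the superdifferential case, exploiting the definitional relationship $\subd\phi(m)=\{\gamma: (-\gamma)\in\supd(-\phi)(m)\}$. First I would observe that if $\phi$ is locally Lipschitz continuous then so is $-\phi$, with the same local Lipschitz constant $K_\alpha$ on every ball of radius $\alpha$; and if $\phi$ is subdifferentiable at $m$ then $-\phi$ is superdifferentiable at $m$ by Definition~\ref{dfn.bd}. So the hypotheses of the already-proved proposition on $\supd$ are satisfied by $-\phi$ at $m$ with the same $\alpha$.

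Next, I would take an arbitrary $\gamma\in\subd\phi(m)$. By definition this means $-\gamma\in\supd(-\phi)(m)$. Applying the superdifferential boundedness proposition to the function $-\phi$ and the supergradient $-\gamma$ yields
\[
\|-\gamma\|_{L_2(\rd,m;\rd)} \leq K_\alpha,
\]
where $K_\alpha$ is the Lipschitz constant of $-\phi$ on the ball of radius $\alpha$, which coincides with the Lipschitz constant of $\phi$ on that ball. Since $\|-\gamma\|_{L_2(\rd,m;\rd)}=\|\gamma\|_{L_2(\rd,m;\rd)}$, this gives exactly the asserted estimate.

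Alternatively, if one prefers a self-contained argument rather than invoking the sign-flip, I would repeat the computation directly: put $b=\gamma^\tran$ in the definition of the subdifferential (i.e. in $(-\gamma)\in\supd(-\phi)(m)$ unwound), obtaining $\phi((\Id+\tau\gamma^\tran)\sharp m)-\phi(m)\geq \tau\|\gamma\|_{L_2(\rd,m;\rds)}^2-\tau\xi(\tau)$, then bound the left side below by $-K_{\alpha+\epsilon}W_2((\Id+\tau\gamma^\tran)\sharp m,m)\geq -K_{\alpha+\epsilon}\tau\|\gamma\|_{L_2(\rd,m;\rds)}$ using Proposition~\ref{prp.bound.mom} and local Lipschitz continuity, divide by $\tau$, let $\tau\to0$ and then $\epsilon\to0$ with right-continuity of $K_\cdot$. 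There is essentially no obstacle here: the only mild point to be careful about is ensuring, via Proposition~\ref{prp.bound.mom}, that the perturbed measures $(\Id\pm\tau\gamma^\tran)\sharp m$ stay within a ball of radius $\alpha+\epsilon$ for small $\tau$, so that the local Lipschitz bound with constant $K_{\alpha+\epsilon}$ applies — exactly as in the superdifferential proof. I would present the short version (reduction to the previous proposition), since it is cleanest.
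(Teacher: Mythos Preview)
Your proposal is correct and matches the paper, which does not spell out a proof but simply states that ``an analogous statement can be proved for subdifferentials''; your reduction via the defining relation $\gamma\in\subd\phi(m)\Leftrightarrow -\gamma\in\supd(-\phi)(m)$ (and the observation that $-\phi$ shares the same local Lipschitz constant) is exactly the intended analogy. One minor wording slip in your alternative sketch: you should bound the left-hand side $\phi((\Id+\tau\gamma^\tran)\sharp m)-\phi(m)$ from \emph{above} by $K_{\alpha+\epsilon}\tau\|\gamma\|$, not from below, to conclude $\|\gamma\|^2-\xi(\tau)\le K_{\alpha+\epsilon}\|\gamma\|$ --- but the argument itself is fine.
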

%======================================================================================
\printbibliography
%======================================================================================
\end{document}